\def\NAT@def@citea{\def\@citea{\NAT@separator}}
\theoremstyle{plain}
\newtheorem{theorem}{Theorem}[section]
\newtheorem{lemma}[theorem]{Lemma}
\newtheorem{proposition}[theorem]{Proposition}
\theoremstyle{definition}
\newtheorem{definition}[theorem]{Definition}
\newtheorem{example}[theorem]{Example}
\theoremstyle{remark}
\newtheorem{remark}{Remark}
\begin{document}


\title{A Random Billiard Map in the Circle}

\author{
\name{Túlio Vales \textsuperscript{a}\thanks{Túlio Vales. Email: tuliovf@ufmg.br}, and Sônia Pinto-de-Carvalho\textsuperscript{b}\thanks{Sônia Pinto-de-Carvalho. Email: sonia@mat.ufmg.br}}
}

\maketitle

\begin{abstract}
We consider a random billiard map, the one in which the standard specular reflection rule is replaced by a random reflection given by a Markov operator. We exhibit an invariant measure for random billiards on general tables. In the special case of a circular table we show that almost every (random) orbit is dense in the boundary as well as in the circular ring  formed between the circle boundary and the random caustic. We additionally prove Strong Knudsen’s Law for a particular case of families of absolutely continuous measures with respect to Liouville.
\end{abstract}

\begin{keywords}
Circular billiards, Random Billiards, Random Maps, Dense Orbits, Lyapunov Exponents, Invariant Measure, Knudsen's Law.
\end{keywords}

\section{Introduction}

A deterministic billiard is a system where a particle moves with constant velocity inside a compact plane region (which has a smooth boundary and is called the billiard table), reflecting elastically at the impacts with the boundary. This means that the angle of incidence is equal to the angle of reflection. On a random billiard, this last property does not hold:  the angles of reflection  are chosen  by a certain probability law.
 
In this work we are particularly interested on a random circular billiard: we perturb the deterministic billiard on a circle using the random map introduced by Renato Feres in \cite{feres} to change the angle of reflection. 

The random Feres map will model a billiard table with microscopic irregularities of the triangular shape. We consider these irregularities to be dimensionally comparable to the moving particle. 

The random Feres map is generated by the deterministic billiard on an isosceles triangle whose base angles are less than $\frac{\pi}{6}$. (Figure \ref{irregularidade irregular}.) 

\begin{figure}[htp!]
\centering
\includegraphics[scale=0.35]{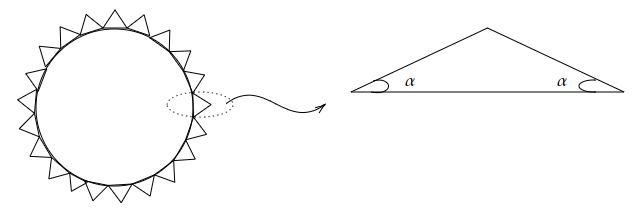}
\caption{Triangular irregularity.}
\label{irregularidade irregular}
\end{figure}

Considering segment $PQ$ with length 1 as the base of the isosceles triangle, we define the first return map to the side $PQ$ as $G(x,\theta)=(y,T_x(\theta))$ where $T_x(\theta)$ is the exit angle. See Figure \ref{colisão em um triangulo}.

\begin{figure}[htp!]
\centering
\includegraphics[scale=0.11]{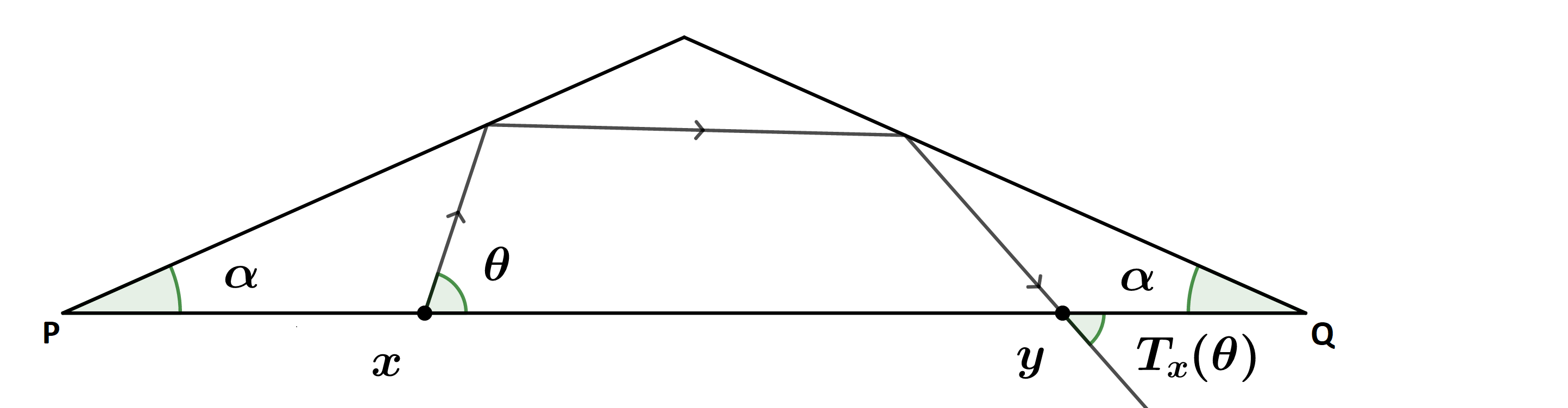}
\caption{Collision in a triangle.}
\label{colisão em um triangulo}
\end{figure}

Since the base angle of this triangle is less than $\frac{\pi}{6}$, the deterministic billiard on this triangle, starting from $PQ$, will have at most two collisions before returning to the $PQ$ side again. Therefore, there are four maps that determine the exit angle after the collision with the $PQ$ side. See Figure \ref{mapa gerado}.

\begin{figure}[htp!]
    \centering
    \includegraphics[scale=0.25]{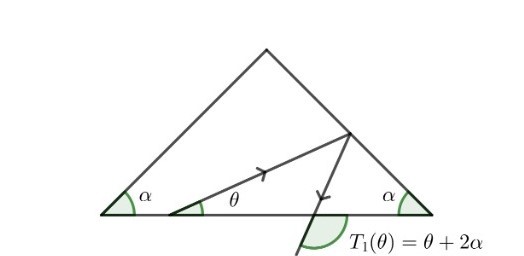}
    \includegraphics[scale=0.25]{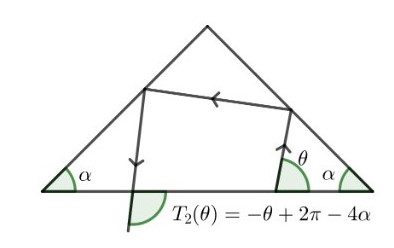}
    \includegraphics[scale=0.25]{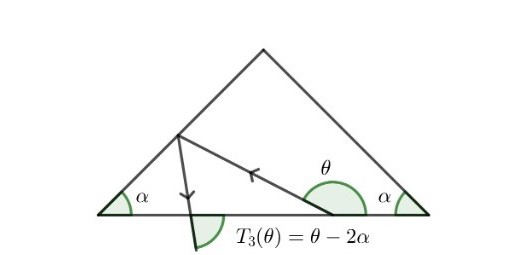}
    \includegraphics[scale=0.25]{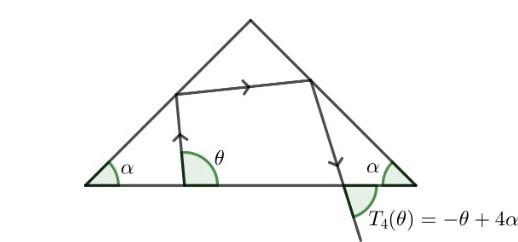}
    \caption{The random Feres map generated by triangular billiards.}
    \label{mapa gerado}
\end{figure}

The probabilities that these exit angles occur are proportional to the $PQ$ side, that is, given an initial exit angle $\theta$, the probability $P_i(\theta)$ of the exit angle after returning to the $PQ$ side is the maximum length (in terms of Lebesgue measure) that we pass from one type of collision to another among four possible. With the help of the Sine and Cosine Laws, we can explicitly determine their respective expressions. In section 3, we will describe all of them.

Kamaludin Dingle, Jeroen S. W. Lamb, and Joan-Andreu Lazaro-Cami in \cite{lamb} use the Feres's random map to study the random billiard on an infinite pipeline and proved the Strong Knudsen's Law in this case. Their strategy is to prove that the associated skew-product (as defined by Wael Bahsoun, Christopher Bose, and Anthony Quas in \cite{Quas}, which is a deterministic representation of random maps)  is an exact endomorphism.
We use the same ideas to prove that the Strong Knudsen's Law is also valid for our random circular billiard.

We also study the dynamical features of the random circular billiard, comparing its dynamics to the deterministic case. 
In fact, under certain conditions, we show the abundance of dense orbits in the boundary of the circular table.  We define and study random caustics, and show the abundance of dense orbits in the circular ring formed by the boundary of the circular table and the random caustic. We also calculate the Lyapunov exponent for the random billiard map in the circle.

 We complete this introduction with a note on the organization.  In Section 2, we make a short introduction to deterministic billiards, presenting some results already known in the literature. In Section 3, we make a brief study  of the Feres's random map and present some results to be used later. In addition, we extend a result addressed in \cite{lamb} about the Strong Knudsen's Law. In Section 4, we define an invariant measure for the random billiard map, and show that the Liouville measure is an invariant measure for that billiard. In Section 5, we make a study about Markov chains and their relation with random maps. In Section 6, we study the dynamics of the random billiard map in the circle with radius one. In fact, under certain conditions, we show the abundance of dense orbits in the boundary of the circular table. We also prove the Strong Knudsen's Law for the random billiard map in the circle for a particular family of absolutely continuous measures with respect to Liouville. Besides that, we define random caustics, and we show the abundance of dense orbits in the circular ring formed by the boundary of the circular table and the random caustic. We also calculate the Lyapunov exponent for the random billiard map in the circle. Finally, in the Appendix A, we calculate the Lyapunov exponent associated with the random billiard map the infinite pipeline.

Although we could work with random billiards in other tables, we remark that, as was proved by Bialy, in \cite{bialy1993convex}, the only $C^2$ closed and convex plane curve where the billiard map preserves the angle throughout the trajectory is the circular billiard. And since the Feres map only acts on the angles, we prefer to study the case of circular billiards. 

\section{Deterministic Billiards}

Let $\gamma$ be a plane, simple, closed, regular, smooth, oriented curve parametrized by the arc length parameter $s\in [0,L)$ and with strictly positive curvature. Let $U$ be the region enclosed by $\gamma$.

The billiard system consists in the free motion of a particle on the planar region $U$, being reflected elastically at the impacts on the boundary $\gamma$. The dynamic is then determined  by the collision point at $\gamma$ and the direction of motion, immediately after each impact. These two elements can be given by the parameter $s\in [0,L]$ that locates the point of reflection, and by the angle $\theta \in (0,\pi)$ between the tangent vector $\gamma'(s)$ and the outgoing trajectory, measured counter-clockwise.

In this context, the billiard system defines a map $F$ (called the billiard map) from the open rectangle $[0,L]\times(0,\pi)$ into itself. 
A trajectory will be a polygonal line on this planar region, i.e., a set of straight segments connecting consecutive impacts. 

If there exists a curve such that every segment of the trajectory is tangent to it, it will be called a caustic.

The set of points $\{F^n(s_0,\theta_0) : n\in \mathbb{Z}\}\subset [0,L)\times(0,\pi)$ is the orbit of the point $(s_0,\theta_0)$ in the phase space $[0,L)\times(0,\pi)$. A point $(s_0,\theta_0)$ is $n$-periodic if $F^n(s_0,\theta_0)=(s_0,\theta_0)$ or, equivalently, if the polygonal line defined by the trajectory is actually an $n$-sided polygon.

The billiard map $F$ has some well-known properties (see, for instance, \cite{birkhoff, markarian, katok, tabachnikov}): if $\gamma$ is a $C^k$-curve, then $F$ is a $C^{k-1}$-diffeomorphism, reversible with respect to the reversing symmetry $G(\varphi,\theta)=(\varphi,\pi-\theta)$ and, as $\gamma$ has strictly positive curvature, has the monotone twist property.

The billiard map $F$ also preserves the probability measure $\nu = \lambda \times \mu$, where $\lambda$ is the normalized Lebesgue measure for $[0, L)$ and $\mu(A) = \frac{1}{2} \int_{A} \sin(\theta) d\theta$ for every measurable set $A \subset (0,\pi)$. This $\lambda \times \mu$ measure is also called the Liouville measure. 

Moreover, the derivative of $F$ can be implicitly calculated and is given by 
 $$DF(s_0,\theta_0) = \frac{1}{\sin \theta_1} \left [\begin{array}{cc} l_{01} k_0- \sin \theta_0 & l_{01} \\ k_1 (l_{01} k_0- \sin \theta_0) -k_0 \sin \theta_1 & l_{01} k_1- \sin \theta_1 \end{array} \right],$$
where $(s_1,\theta_1)=F(s_0,\theta_0)$, $k_0$, $k_1$ are the curvatures of the curve in $s_0$, $s_1$, respectively, and $l_{01}$ is the distance between the collision points $\gamma (s_0)$ and $\gamma(s_1)$. 

We remark that everything holds on a more general setting: billiard tables may not be convex or might have infinite horizon. In this general case, the billiard problem still defines a diffeomorphism preserving a probability measure, and its derivative can be implicitly computed.

We are mainly interested in the circular billiard, defined when $\gamma$ is the unitary circle. The billiard map is given then by the map $F: [0,2\pi) \times (0, \pi) \rightarrow [0,2\pi) \times (0, \pi)$ defined by $F(s_0,\theta_0)=(s_0+2\theta_0 \mod 2\pi,\theta_0)$. 

Each trajectory with outgoing angle $\theta_0$ corresponds to a caustic curve, which is a concentric circle with radius  $\cos \theta_0.$

\begin{figure}[!htb]
\centering
\includegraphics[scale=0.4]{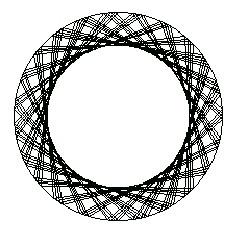}
\caption{Caustic of deterministic circular billiards for a trajectory.}
\end{figure}

Every horizontal circle $\{\theta = \theta_0\}$ in the phase space is invariant under $F$. Therefore, the phase space is foliated by these invariant horizontal circles. Furthermore, $F|_{\theta = \theta_0}$ is a circle rotation with an angle of $2\theta_0.$ 

We have, then, the following:
\begin{enumerate}
\item[a)] if the outgoing angle $\theta_0 = m\,\pi/n$ with $\gcd(m,n)=1,$ then for any $s_0\in[0,2\pi)$, $(s_0,\theta_0)$ is an $n$-periodic point; 
\item[b)] if the outgoing angle $\theta_0 = \beta \pi$ where $\beta$ is irrational, then the set $\{s_n : F^n(s_0,\theta_0) = (s_n,\theta_0)\}$ is dense in $[0,2\pi].$ That is, the projection in the first coordinate of the orbit of $(s_0,\theta_0)$ under $F$ is dense in the boundary circle;
\item[c)] if the outgoing angle $\theta_0 = \beta \pi$ with $\beta$ is irrational, then the billiard trajectory densely fills the circular ring formed by the boundary circle and the associated caustic. 
\end{enumerate}

\section{Random Map}
Let $(X, \mathcal{B}(X),\mu)$ be a measure space. For each $i = \{1, 2, \dots, N\}$, let $T_i: X \rightarrow X$ be a map, and let $p_i: X \rightarrow [0,1]$ be a function on $X$. Following \cite{Quas, lamb, feres, pelikan}, we define the random map $T:X \rightarrow X$, such that $T(x)= T_i(x)$ with probability $p_i(x)$. Iteratively, for each $n \in \mathbb{N}$, we define $T^{(n)}(x) = T_{i_n}\circ T_{i_{n-1}} \circ \dots \circ T_{i_1}(x)$ with probability  $p_{i_1}(x)p_{i_2}(T_{i_1}(x))\dots p_{i_n}(T_{i_{n-1}}\circ \dots \circ T_{i_1}(x)).$
The transition probability kernel of the random map $T$ is given by 
\begin{equation}
K(x, A) = \sum_{i=1}^{N} p_i(x)\mathds{1}_{A}(T_i(x)). \label{nucleo}
\end{equation}

The transition probability kernel $K(x,A)$, as shown in \eqref{nucleo}, defines the evolution of an initial distribution (probability measure) $\nu$ on $(X, \mathcal{B}(X))$ under the random map $T$ iteratively as
$$\nu^{(0)}:= \nu, \;\; \nu^{(n+1)}(A) = \int_X K(x,A)d\nu^{(n)}(x),$$
where $A \in \mathcal{B}(X)$ and $n \geq 1.$ 

\begin{definition}\label{medida invariante}
Let $T$ be a random map and let $\mu$ be a measure on $X.$ Then, $\mu$ is $T$-invariant if $\mu(A) = \sum_{i=1}^{N} \int_{T^{-1}_i(A)} p_i(x)d\mu(x)$ for all $A \in \mathcal{B}(X).$ 
\end{definition}

\subsection{The Feres Random Map}
Let us consider the Feres Random Map $T$ introduced in \cite{feres}. For a fixed $\alpha < \dfrac{\pi}{6},$ let $T: [0,\pi] \rightarrow [0,\pi]$ be such that 
\begin{equation}\label{M1}
T(\theta) = T_i(\theta) \mbox{ with probability } p_i(\theta),
\end{equation}
where $T_1(\theta) = \theta + 2\alpha,\; T_2(\theta) = -\theta + 2\pi - 4\alpha,\; T_3(\theta) = \theta -2\alpha$ and $T_4(\theta) = -\theta + 4\alpha$, and their respective probabilities are given by:
\begin{eqnarray*}
p_1(\theta)& = &\begin{cases} 1, & \;\;\;\;\;\;\;\;\;\;\;\;\;\;\;\;\;\;\;\;\;\;\; \mbox{if } \theta \in [0,\alpha) \\ u_{\alpha}(\theta), & \;\;\;\;\;\;\;\;\;\;\;\;\;\;\;\;\;\;\;\;\;\;\; \mbox{if }  \theta \in [\alpha, \pi -3\alpha) \\ 2\cos(2\alpha)u_{2\alpha}(\theta), & \;\;\;\;\;\;\;\;\;\;\;\;\;\;\;\;\;\;\;\;\;\;\; \mbox{if }  \theta \in [\pi - 3\alpha, \pi -2\alpha) \\ 0, & \;\;\;\;\;\;\;\;\;\;\;\;\;\;\;\;\;\;\;\;\;\;\; \mbox{if }  \theta \in [\pi-2\alpha, \pi]  \end{cases},\\
p_2(\theta)& = &\begin{cases} 0,  & \;\;\;\;\;\;\;\;\;\; \mbox{if } \theta \in [0,\pi -3\alpha) \\ u_{\alpha}(\theta)-2\cos (2\alpha)u_{2\alpha}(\theta),  & \;\;\;\;\;\;\;\;\;\; \mbox{if } \theta \in [\pi -3\alpha, \pi -2\alpha) \\ u_{\alpha}(\theta),  & \;\;\;\;\;\;\;\;\;\; \mbox{if } \theta \in [\pi - 2\alpha, \pi -\alpha) \\ 0, & \;\;\;\;\;\;\;\;\;\; \mbox{if } \theta \in [\pi-\alpha, \pi] 
\end{cases},\\
p_3(\theta)& = &\begin{cases} 0, & \;\;\;\;\;\;\;\;\;\;\;\;\;\;\;\;\;\;\;\; \mbox{if } \theta \in [0,2\alpha) \\ 2\cos (2\alpha)u_{2\alpha}(-\theta), & \;\;\;\;\;\;\;\;\;\;\;\;\;\;\;\;\;\;\;\; \mbox{if } \theta \in [2\alpha, 3\alpha) \\ u_{\alpha}(-\theta), & \;\;\;\;\;\;\;\;\;\;\;\;\;\;\;\;\;\;\;\; \mbox{if } \theta \in [3\alpha, \pi -\alpha) \\ 1, & \;\;\;\;\;\;\;\;\;\;\;\;\;\;\;\;\;\;\;\; \mbox{if } \theta \in [\pi-\alpha, \pi]
\end{cases},\\
p_4(\theta) & = &\begin{cases} 0, & \;\;\;\; \mbox{if } \theta \in [0,\alpha) \\ u_{\alpha}(-\theta), & \;\;\;\; \mbox{if } \theta \in [\alpha, 2\alpha) \\ u_{\alpha}(-\theta)-2\cos(2\alpha)u_{2\alpha}(-\theta), & \;\;\;\; \mbox{if } \theta \in [2\alpha, 3\alpha) \\ 0, & \;\;\;\; \mbox{if } \theta \in [3\alpha, \pi]  \end{cases},
\end{eqnarray*}
with $u_{\alpha}(\theta) = \frac{1}{2}(1+\frac{tg\alpha}{tg\theta}).$
For more details, see \cite{feres, lamb}. 

\begin{proposition}
Let $T$ be the Feres Random Map defined in \eqref{M1}. Then, the measure defined by $\mu(A)=\frac{1}{2}\int_A \sin(\theta)d\theta$ is $T$-invariant. 
\end{proposition}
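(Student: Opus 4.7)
The plan is to verify the invariance condition of Definition~\ref{medida invariante} directly from the explicit pieces of the $T_i$ and $p_i$. Applying the definition to indicator functions and approximating, $T$-invariance of $\mu$ is equivalent to the duality identity
\begin{equation}
\int_0^\pi f(\phi)\sin\phi\, d\phi \;=\; \sum_{i=1}^{4}\int_0^\pi f\bigl(T_i(\theta)\bigr)\, p_i(\theta)\sin\theta\, d\theta \label{eq:dualplan}
\end{equation}
for every bounded measurable $f\colon[0,\pi]\to\mathbb{R}$. Each $T_i$ is affine with $|T_i'|\equiv 1$ and is a bijection from $\mathrm{supp}\,p_i$ onto its image, so a change of variables $\phi=T_i(\theta)$ in each term on the right of \eqref{eq:dualplan} reduces the statement to the pointwise identity
\begin{equation}
\sum_{i=1}^{4}\mathds{1}_{\phi\in T_i(\mathrm{supp}\,p_i)}\, p_i\!\left(T_i^{-1}(\phi)\right)\sin\!\left(T_i^{-1}(\phi)\right) \;=\; \sin\phi \label{eq:pwplan}
\end{equation}
for a.e.\ $\phi\in[0,\pi]$, which I would verify.

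The main tool is the pair of trigonometric identities
\[
u_\alpha(\theta)\sin\theta=\frac{\sin(\theta+\alpha)}{2\cos\alpha}, \qquad u_\alpha(-\theta)\sin\theta=\frac{\sin(\theta-\alpha)}{2\cos\alpha},
\]
obtained by expanding $u_\alpha(\theta)=\tfrac12(1+\tan\alpha/\tan\theta)$ and applying the sine addition formula; they yield at once the companions $2\cos(2\alpha)\,u_{2\alpha}(\pm\theta)\sin\theta=\sin(\theta\pm 2\alpha)$. I would then partition $[0,\pi]$ into the seven subintervals delimited by the breakpoints $\alpha,2\alpha,3\alpha,\pi-3\alpha,\pi-2\alpha,\pi-\alpha$ (all inside $[0,\pi]$ because $\alpha<\pi/6$). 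On each subinterval the preimages $T_i^{-1}(\phi)$ land in well-defined pieces of the supports of the $p_i$, with only one, two, or three indices $i$ contributing; substituting the corresponding branches into the left side of \eqref{eq:pwplan} and applying the identities above, each verification reduces to a single sum-to-product manipulation of the type $\sin(\phi+\alpha)+\sin(3\alpha-\phi)=2\sin 2\alpha\cos(\phi-\alpha)=4\sin\alpha\cos\alpha\cos(\phi-\alpha)$.

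The principal difficulty is bookkeeping rather than depth: one must track, on each of the seven subintervals, which preimage falls in which branch of which $p_i$ and keep the signs straight. The workload is roughly halved by the reflection symmetry $\theta\mapsto\pi-\theta$, which swaps the pairs $(T_1,T_3)$ and $(T_2,T_4)$ (and the corresponding probability branches) and preserves the density $\sin\theta$. A cleaner, more conceptual alternative I would mention is to realize $T$ as the angular marginal of the Poincaré return map $(x,\theta)\mapsto(y,T_x(\theta))$ of the isosceles-triangle billiard to the base $PQ$: since the triangular billiard preserves Liouville, the return map preserves $dx\otimes\tfrac12\sin\theta\,d\theta$ on $[0,1]\times[0,\pi]$, so taking $f=\mathds{1}_A(\theta)$ in the invariance identity and integrating out the uniformly distributed position $x$ yields exactly Definition~\ref{medida invariante} for $T$.
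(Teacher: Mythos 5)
Your proposal is correct, and it is in fact more informative than what the paper records: the paper's ``proof'' of this proposition is only a citation to Feres's original article, so no argument is actually spelled out there. Your direct route is sound. The reduction of Definition~\ref{medida invariante} to the dual identity against bounded measurable test functions, followed by the unit-Jacobian change of variables $\phi=T_i(\theta)$ on each support, is exactly the right normal form, and the two identities $u_\alpha(\pm\theta)\sin\theta=\sin(\theta\pm\alpha)/(2\cos\alpha)$ (hence $2\cos(2\alpha)\,u_{2\alpha}(\pm\theta)\sin\theta=\sin(\theta\pm2\alpha)$) do make every subinterval check collapse to a one-line sum-to-product computation: on representative intervals such as $(0,\alpha)$, $(\alpha,2\alpha)$, $(2\alpha,3\alpha)$, $(3\alpha,\pi-3\alpha)$ and $(\pi-\alpha,\pi)$ the contributing branches sum to $\sin\phi$ exactly, and the reflection $\theta\mapsto\pi-\theta$ disposes of the mirror-image intervals as you say, so the remaining work really is only bookkeeping. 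The conceptual alternative you sketch --- realizing $T$ as the angular marginal of the Liouville-preserving return map of the triangular cell to its base, with $p_i(\theta)$ equal to the Lebesgue measure of the set of base points $x$ for which $T_x(\theta)=T_i(\theta)$ --- is essentially Feres's own argument, i.e.\ the one the paper is implicitly invoking by its citation; it buys generality (it works for any cell geometry, not just this triangle) at the cost of importing the invariance of the Liouville measure under the induced billiard map, whereas your computational route is elementary, self-contained, and checkable line by line. Either version would be a legitimate replacement for the bare citation.
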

\begin{proof}
See \cite{feres}.
\end{proof}

Consider $\Sigma = \{1,2,3,4 \}^{\mathbb{N}}$ the space of sequences formed by the symbols $1,2,3$ and $4$. With this in mind, we denote a sequence $(x_1,x_2,\dots)$ in $\Sigma$ by $\underline{x}.$

\begin{definition}\label{sigmatheta}
Given $\theta \in (0,\pi),$ we define 
$$\Sigma_\theta :=\{ \underline{x} \in \Sigma: p_{x_1}(\theta)p_{x_2}(T_{x_1}(\theta))\dots p_{x_k}\big( T_{x_{k-1}}\circ T_{x_{k-2}}\circ \dots \circ T_{x_1}(\theta)\big) >0, \;\; \forall k \geq 1  \}. $$ 
In other terms, a sequence $\underline{x}$ is in $\Sigma_{\theta}$ if the composition $T_{\underline{x}}^{(k)}(\theta):= T_{x_k} \circ T_{x_{k-1}} \circ \dots \circ T_{x_1}(\theta)$ has positive probability for each fixed $k \in \mathbb{N}.$
\end{definition}

\begin{definition} \label{admissivel}
Given $\theta \in (0,\pi)$, for every sequence $\underline{x}=(x_n)_{n} \in \Sigma_{\theta},$ the related sequence $(\theta_n)_n := (T_{\underline{x}}^{(n)}(\theta))_n$ is called an admissible sequence for $\theta.$      
\end{definition}


\begin{definition} \label{def1}
Given $\theta \in (0,\pi)$, we define $$\mathcal{C}(\theta) :=\left\{ \theta' \in (0,\pi) : T_{\underline{x}}^{(k)}(\theta)=\theta' \mbox{ for some } k\in \mathbb{N} \mbox{ and } \underline{x} \in \Sigma_\theta \right\}. $$
\end{definition}

Thus, $\mathcal{C}(\theta)$ is the set of all possible future images of the angle $\theta$ by the Feres Random Map $T.$ That is, $\theta' \in \mathcal{C}(\theta)$ if there are an admissible sequence $(\theta_n)_n$ and a $k \in \mathbb{N}$, such that $\theta_k = T_{x_k} \circ T_{x_{k-1}}\circ \dots \circ T_{x_1}(\theta) = \theta'.$

\begin{remark}
We observe that, in the case of the random maps $T_1 \circ T_3 = T_3 \circ T_1 = Id$, $T_2 \circ T_2 = Id$, and $T_4 \circ T_4 = Id$, if $\theta' \in \mathcal{C}(\theta)$, then $\theta \in \mathcal{C}(\theta')$. Therefore, $\mathcal{C}(\theta) = \mathcal{C}(\theta').$ So, given $\theta, \theta' \in (0,\pi),$ we conclude that  $\mathcal{C}(\theta) \cap \mathcal{C}(\theta') = \emptyset$ or $\mathcal{C}(\theta) = \mathcal{C}(\theta').$  \label{obs0} 
\end{remark}


\begin{proposition}
If $\alpha = \frac{m}{n}\pi$, with $m \in \mathbb{Z}$ and $n\in \mathbb{Z}^*$, then $\mathcal{C}(\theta)$ is finite.\label{inv}
\end{proposition}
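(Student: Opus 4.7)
The plan is to reduce the statement to an elementary counting argument, based on the observation that each of the four maps $T_i$ is affine of the form $T_i(\theta)=\epsilon_i\theta+c_i$, where $\epsilon_i\in\{+1,-1\}$ and $c_i\in\{2\alpha,\,-2\alpha,\,4\alpha,\,-4\alpha+2\pi\}$. In particular, every constant $c_i$ lies in the additive subgroup $2\alpha\mathbb{Z}+2\pi\mathbb{Z}\subset\mathbb{R}$.

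First I would prove by induction on $k$ that for any $\underline{x}\in\Sigma_\theta$ and any $k\geq 1$, the iterate admits the representation
\[
T_{\underline{x}}^{(k)}(\theta)=\sigma_k\theta+d_k,\qquad \sigma_k\in\{+1,-1\},\ \ d_k\in 2\alpha\mathbb{Z}+2\pi\mathbb{Z}.
\]
The base case $k=1$ is immediate from the list above, and the inductive step follows from the identity
\[
T_{x_{k+1}}(\sigma_k\theta+d_k)=\epsilon_{x_{k+1}}\sigma_k\,\theta+\bigl(\epsilon_{x_{k+1}}d_k+c_{x_{k+1}}\bigr),
\]
together with the fact that the subgroup $2\alpha\mathbb{Z}+2\pi\mathbb{Z}$ is closed under addition and under multiplication by $\pm 1$.

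Next I invoke the hypothesis $\alpha=\tfrac{m}{n}\pi$: the subgroup $2\alpha\mathbb{Z}+2\pi\mathbb{Z}$ is then contained in $\tfrac{\pi}{n}\mathbb{Z}$, so every element of $\mathcal{C}(\theta)$ has the form $\pm\theta+\tfrac{j\pi}{n}$ for some $j\in\mathbb{Z}$. Imposing the constraint $\mathcal{C}(\theta)\subset(0,\pi)$, the integer $j$ is forced to lie in an interval of length $n$, giving at most $n$ admissible values of $j$ for each choice of the sign in front of $\theta$. Consequently $\mathcal{C}(\theta)$ is contained in a set of at most $2n$ elements and is in particular finite.

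The argument involves no deep ingredient; the only point that requires care is the bookkeeping of the signs $\epsilon_i$ during composition and the verification that the constant $2\pi$ appearing in $c_2$ does not break the inclusion $2\alpha\mathbb{Z}+2\pi\mathbb{Z}\subseteq\tfrac{\pi}{n}\mathbb{Z}$, which it does not since $2\pi=\tfrac{2n\pi}{n}$. Thus the main (and only) obstacle is writing the induction cleanly.
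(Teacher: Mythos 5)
Your proof is correct, but it takes a genuinely different route from the paper's. The paper works modulo $\pi$ and shows that the reduced maps $\tilde{T}_i \equiv T_i \bmod \pi$ generate a finite dihedral group $G=\langle \tilde{T}_1,\tilde{T}_2\rangle$ (using $\tilde{T}_1^{n-1}=\tilde{T}_3$ and $\tilde{T}_2\circ\tilde{T}_1^{n-4}=\tilde{T}_4$), then bounds $\#\mathcal{C}(\theta)$ by the order of the $G$-orbit of $\theta$. You instead keep everything in $\mathbb{R}$, track the affine normal form $T_{\underline{x}}^{(k)}(\theta)=\pm\theta+d_k$ with $d_k\in 2\alpha\mathbb{Z}+2\pi\mathbb{Z}\subseteq\tfrac{\pi}{n}\mathbb{Z}$, and count the elements of $\{\pm\theta+\tfrac{j\pi}{n}\}$ that can land in $(0,\pi)$. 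Your argument is more elementary, yields the explicit bound $\#\mathcal{C}(\theta)\le 2n$, and does not need the reduction to $\gcd(m,n)=1$ that the paper quietly assumes; it is in fact closer in spirit to the paper's own proof of the companion result for irrational $\alpha/\pi$ (Proposition \ref{infinito}), where the representation $\theta'=\pm\theta+J\pi+K\alpha$ is used. What the paper's group-theoretic formulation buys is structural information (the dihedral action on angles) that is reused implicitly when the Markov chain on $\mathcal{C}(\theta)$ is analyzed later, whereas your counting argument delivers only the finiteness and the cardinality bound. Either proof is acceptable for the proposition as stated.
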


\begin{proof}
Suppose that $\alpha = \frac{m}{n}\pi$ with $\gcd(m,n)=1.$ Consider $\tilde{T}_i \equiv T_i \mod \pi$, with $i=\{1,2,3,4 \}.$
We claim that $\{\tilde{T_i}\}_{i=1}^4$ generate the dihedral group. Indeed, notice that
\begin{equation*}
\begin{split}
& \tilde{T_1}^{n-1}(\theta) = \theta + 2(n-1)\alpha = \theta + 2n\alpha - 2\alpha \equiv \theta - 2\alpha \mod \pi = \tilde{T_3}(\theta), \\ & \tilde{T_2} \circ \tilde{T_1}^{n-4}(\theta)=-\theta -2n\alpha +2\pi +4\alpha \equiv -\theta + 4\alpha \mod \pi = \tilde{T_4}(\theta).
\end{split}
\end{equation*}
So we have that $G := < \tilde{T_1}, \tilde{T_2} >$ is the dihedral group generated by $\tilde{T_1}$ and $\tilde{T_2}$, and its order is $n$ or $\frac{n}{2}$, depending on the parity of $n.$ Finally, notice that $\mathcal{C}(\theta)$ is contained in the orbit of $\theta$ by the action of the group $G$, that is, the order of $\mathcal{C}(\theta)$ is less than or equal to the order of $G$. Therefore, $\mathcal{C}(\theta)$ is finite. 
\end{proof}

\begin{proposition}
If $\alpha=\beta\pi$ with $\beta \in \mathbb{R}\setminus \mathbb{Q},$ then $ \mathcal{C}(\theta)$ is countably infinite.\label{infinito}
\end{proposition}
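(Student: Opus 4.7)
My plan is to split the statement into the two claims and handle them separately. Countability is immediate, since $\mathcal{C}(\theta)=\bigcup_{k\geq 1}\bigl\{T_{\underline{x}}^{(k)}(\theta):\underline{x}\in\Sigma_{\theta}\bigr\}$ is a countable union of finite sets (with at most $4^k$ elements at length $k$). For infinitude I would argue by contradiction: suppose $S:=\mathcal{C}(\theta)$ is finite. By the definition of $\Sigma_{\theta}$, $S$ is closed under every admissible transition, that is, $T_i(\theta')\in S$ whenever $\theta'\in S$ and $p_i(\theta')>0$.

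The central tool I want to use is reduction modulo $2\alpha$. The shifts $T_1,T_3$ preserve residues in $[0,2\alpha)$; the reflection $T_2$ induces $\phi_2:r\mapsto -r+\delta$ where $\delta:=2\pi\bmod 2\alpha$; and $T_4$ induces $\phi_1:r\mapsto -r$. Because $\beta=\alpha/\pi$ is irrational, $\delta/(2\alpha)=\{\pi/\alpha\}$ is also irrational, so $\delta\notin\{0,\alpha\}$ and the additive orbit $\{n\delta\bmod 2\alpha:n\in\mathbb{Z}\}$ is dense in $[0,2\alpha)$. I would then verify that from any $\beta\in S$, iterating $T_1$ until admissibility fails lands us in $[\pi-2\alpha,\pi)$, and there either $T_2$ is directly admissible or a single $T_3$ backward step lands in $(\pi-3\alpha,\pi-2\alpha)$ where it is; the symmetric argument with $T_3$ (followed, if needed, by one $T_1$-correction) yields an element in $(\alpha,3\alpha)$ where $T_4$ is admissible. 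Since $T_1,T_3$ fix residues, the projection $S\bmod 2\alpha$ is therefore closed under both $\phi_1$ and $\phi_2$, hence under $\phi_2\circ\phi_1:r\mapsto r+\delta$. Iterating this translation forces $S\bmod 2\alpha$ to contain the dense orbit $\{r_0+n\delta\}_{n\in\mathbb{Z}}$ in $[0,2\alpha)$, contradicting the finiteness of $S$.

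The main obstacle will be the boundary behaviour of $p_2$ and $p_4$: using the identity $\tan(3\alpha)-\tan(2\alpha)=\sin\alpha/(\cos 2\alpha\cos 3\alpha)$ one checks directly that $p_2(\pi-3\alpha)=p_2(\pi-\alpha)=p_4(\alpha)=p_4(3\alpha)=0$, so the admissibility intervals used above are effectively open. For ``degenerate'' residues $r\equiv\alpha$ or $r\equiv\pi-\alpha\pmod{2\alpha}$, the $\langle T_1,T_3\rangle$-orbit of $\beta$ meets the corresponding jump region only on its boundary, blocking one of $\phi_1,\phi_2$. I will handle this by observing that these two degenerate residues cannot coincide modulo $2\alpha$ under the irrationality hypothesis---coincidence would force $\pi/(2\alpha)\in\mathbb{Z}$---so at every $r$ at least one of $\phi_1,\phi_2$ remains available; a short case analysis then shows that within at most two or three admissible moves the residue still shifts by a nonzero multiple of $\delta$, so the density argument goes through and produces the desired contradiction.
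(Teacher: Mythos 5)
Your proposal is correct in outline but follows a genuinely different route from the paper. The paper's proof is a two-line direct argument: every element of $\mathcal{C}(\theta)$ has the form $\pm\theta+J\pi+K\alpha$ with $J,K\in 2\mathbb{Z}$, and since $\alpha/\pi$ is irrational, distinct pairs $(J,K)$ yield distinct angles; countability comes for free from the parametrization, and infinitude is read off from the distinctness. What the paper leaves implicit is precisely the point your argument addresses head-on: that infinitely many distinct pairs $(J,K)$ are actually \emph{realized} by admissible itineraries (the angles all live in the bounded interval $(0,\pi)$, so this is not automatic). Your reduction modulo $2\alpha$, under which $T_1,T_3$ act trivially, $T_4$ acts as $r\mapsto -r$ and $T_2$ as $r\mapsto -r+\delta$ with $\delta/(2\alpha)=\{\pi/\alpha\}$ irrational, turns the composite $\phi_2\circ\phi_1$ into an irrational translation and thereby manufactures an explicit infinite family of reachable residues --- this is exactly the missing realizability statement, at the cost of a contradiction argument and a nontrivial admissibility analysis. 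The only place your argument could still founder is the reachability of the $T_2$- and $T_4$-admissible windows from an arbitrary $\langle T_1,T_3\rangle$-fiber, including the two degenerate residues $r\equiv\alpha$ and $r\equiv\pi-\alpha\pmod{2\alpha}$ where one reflection is blocked; you have correctly identified that these cannot coincide under the irrationality hypothesis and that the image $-r+\delta$ of a blocked residue under the surviving reflection is itself unblocked, so the case analysis closes, but that verification (in particular the strict positivity of $p_2$ on the open interval $(\pi-3\alpha,\pi-2\alpha)$ and of $p_4$ on $(2\alpha,3\alpha)$) must actually be carried out for the proof to be complete. In short: the paper's proof is shorter but rests on an unproved realizability claim; yours is longer but self-contained once the admissibility bookkeeping is done.
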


\begin{proof}
Given $\theta' \in \mathcal{C}(\theta)$, we have that $\theta' = \pm \theta + J\pi + K\alpha$ for some $J,K \in 2\mathbb{Z}$. If all $\theta'\in \mathcal{C}(\theta)$ are distinct, then $\mathcal{C}(\theta)$ is countably infinite. We observe that $$\theta +J_1\pi+K_1\alpha = \theta +J_2\pi +K_2\alpha $$ if $\alpha = \frac{J_2-J_1}{K_1-K_2}\pi.$ Since $\frac{\alpha}{\pi}$ is an irrational number, all elements in $\mathcal{C}(\theta)$ that are of the form $\theta +J\pi+K\alpha$ are distinct. Therefore, $\mathcal{C}(\theta)$ is countably infinite. 
\end{proof}

\begin{lemma} 
Given $\theta \in (0,\pi)$, such that $\theta \neq k\alpha$ with $k\in\mathbb{N},$ there are an admissible sequence $(\theta_n)_n$ and an $m \in \mathbb{N}$, such that $\theta_m \in (0,\alpha).$\label{cteta} 
\end{lemma}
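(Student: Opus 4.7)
The plan is to drive $\theta$ down into $(0,\alpha)$ by repeated application of $T_3(\theta)=\theta-2\alpha$, with a single two-step correction ($T_4$ followed by $T_3$) in case the descent overshoots into $(\alpha,2\alpha)$ rather than landing directly in $(0,\alpha)$. The hypothesis $\theta\neq k\alpha$ is used precisely to keep every iterate away from the points where $p_3$ or $p_4$ vanishes.

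First I would verify the active regions of the relevant maps directly from the formulas for $p_3$ and $p_4$: on $(2\alpha,3\alpha)$ one has $p_3(\theta)=2\cos(2\alpha)u_{2\alpha}(-\theta)>0$ because $\tan\theta>\tan(2\alpha)$ and $2\alpha<\pi/3$; on $[3\alpha,\pi-\alpha)$ the expression $u_\alpha(-\theta)$ is positive; and on $[\pi-\alpha,\pi]$ one has $p_3\equiv 1$. Thus $p_3>0$ on all of $(2\alpha,\pi]$. Similarly, on $(\alpha,2\alpha)$ the formula $p_4(\theta)=u_\alpha(-\theta)$ is positive since $\tan\theta>\tan\alpha$.

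Next I would set $n:=\lfloor \theta/(2\alpha)\rfloor$ and apply $T_3$ exactly $n$ times to produce $\theta_k:=\theta-2k\alpha$ for $k=0,1,\dots,n$. The admissibility of this sequence requires $\theta_{k-1}>2\alpha$ for $k=1,\dots,n$, i.e.\ $\theta>2n\alpha$; this is guaranteed by the hypothesis, because $\theta=2n\alpha$ would place $\theta$ in $\alpha\mathbb{N}$. After this stage $\theta_n\in(0,2\alpha)$, and $\theta_n\neq\alpha$ since $\theta_n=\alpha$ would give $\theta=(2n+1)\alpha$, again forbidden. If $\theta_n\in(0,\alpha)$ we take $m=n$ and stop. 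Otherwise $\theta_n\in(\alpha,2\alpha)$; applying $T_4$ (allowed by the previous paragraph) gives $\theta_{n+1}=-\theta_n+4\alpha\in(2\alpha,3\alpha)$, and then applying $T_3$ (allowed since $\theta_{n+1}>2\alpha$) gives $\theta_{n+2}=-\theta_n+2\alpha\in(0,\alpha)$, so we finish with $m=n+2$.

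The only subtlety is bookkeeping the open/closed endpoints so that every probability in the chain is strictly positive, and this is handled uniformly by the non-resonance hypothesis $\theta\notin\alpha\mathbb{N}$: it prevents the descent from ever hitting $2\alpha$ (where $p_3=0$) and from parking on the boundary $\theta=\alpha$ (where $p_4=0$). No genuine obstacle arises beyond this careful case split.
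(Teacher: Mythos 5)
Your proof is correct and follows essentially the same route as the paper's: descend by repeated application of $T_3$ until landing in $(0,2\alpha)$, then correct an overshoot into $(\alpha,2\alpha)$ by $T_3\circ T_4$. You are in fact more explicit than the paper about where the hypothesis $\theta\notin\alpha\mathbb{N}$ enters (keeping the iterates off the zeros of $p_3$ and $p_4$) and about verifying positivity of the relevant probabilities.
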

\begin{proof}
Suppose initially that $\theta \in (\alpha,2\alpha).$ We can apply $T_4$, and so, $T_4(\theta) \in (2\alpha, 3\alpha).$ Then, $T_3 \circ T_4(\theta) \in (0,\alpha).$ Suppose now that $\theta$ belongs to a subinterval other than $(\alpha,2\alpha).$ There is a maximum number of times that we can apply $T_3$ consecutively; let $l$ be this number. Thus, we have two options: $T_3^l(\theta) \in (0,\alpha)$ or $T_3^l(\theta) \in (\alpha,2\alpha).$ If $T_3^l(\theta) \in (\alpha, 2\alpha)$, then it follows that $T_3 \circ T_4 \circ T_3^l(\theta) \in (0,\alpha).$    
\end{proof}

\subsection{The Strong Knudsen's Law for the Feres Random Map}

Following \cite{Quas, lamb}, we will define a skew-product in the space $\Omega = [0,1] \times [0, \pi]$. Let $T : [0,\pi] \rightarrow [0,\pi]$ be the Feres Random Map defined in \eqref{M1}, and let $$J_k = \Big\{(x, \theta) \in \Omega : \sum_{i<k}p_i(\theta) \leq x < \sum_{i \leq k}p_i(\theta)\Big\},$$ with $k=1,2,3,4.$
Consider $\varphi_k(x,\theta) = \frac{1}{p_k(\theta)}\big(x - \sum_{i=1}^{k-1}p_k(\theta)\big)$, if $(x, \theta) \in J_k$. We define the skew-product of $T$ by the map $S:\Omega \rightarrow \Omega$, such that:
\begin{equation}\label{skew1}
S(x,\theta) = (\varphi_k(x, \theta), T_k(\theta)), \; \mbox{if} \; (x, \theta) \in J_k.
\end{equation}

The next theorem is proved in \cite{lamb}.

\begin{theorem}\label{teo lamb}
Let $T$ be the Feres Random Map described in \eqref{M1}, let $\mu$ be the measure defined by $\mu(A)=\frac{1}{2}\int_A\sin(\theta)d\theta$, and let $\nu \ll \mu.$ If $\frac{\alpha}{\pi}$ is an irrational number, then $\nu^{(n)}(A) \rightarrow \mu(A)$, for all $A \in \mathcal{B}([0,\pi])$.
\end{theorem}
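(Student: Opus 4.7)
The plan is to adapt the skew-product strategy of Bahsoun--Bose--Quas \cite{Quas} used in \cite{lamb}. Setwise convergence $\nu^{(n)}(A)\to\mu(A)$ for every $\nu\ll\mu$ and every $A\in\mathcal{B}([0,\pi])$ is equivalent to $L^1(\mu)$-asymptotic stability of the Markov operator associated with the kernel \eqref{nucleo}, which in turn will follow from showing that the deterministic skew product $S$ of \eqref{skew1} is an exact endomorphism of $(\Omega,m\times\mu)$, where $m$ denotes Lebesgue measure on $[0,1]$.

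The argument then splits into three steps. First, a direct unwinding of the definitions shows that $\nu^{(n)}$ equals the $\theta$-marginal of $S^n_{\ast}(m\otimes\nu)$, so total-variation convergence of the latter to $m\times\mu$ will suffice. Second, $m\times\mu$ must be checked to be $S$-invariant: each branch $S|_{J_k}$ is a piecewise affine bijection of $J_k$ onto $[0,1]\times T_k(\mathrm{supp}\,p_k)$ with Jacobian $1/p_k(\theta)$ in $x$ and the Jacobian of $T_k$ in $\theta$, and a change of variables followed by summation over $k$ reduces invariance to the identity $\mu(A)=\sum_{k}\int_{T_k^{-1}(A)}p_k\,d\mu$, i.e.\ the $T$-invariance of $\mu$ already established by Feres.

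The third step, exactness, is where the hypothesis $\alpha/\pi\notin\mathbb{Q}$ enters and is the main obstacle. Each branch $S|_{J_k}$ expands $x$ by the factor $1/p_k(\theta)\geq 1$, so iterates of $S$ smooth densities along the $x$-fibre. To propagate this smoothing onto the $\theta$-coordinate we use Lemma \ref{cteta} to drive $\mu$-a.e.\ $\theta$ into $(0,\alpha)$ along an admissible word; inside $(0,\alpha)$ one has $p_1\equiv 1$, so $S$ acts deterministically as $(x,\theta)\mapsto(x,\theta+2\alpha)$ and, since $\alpha/\pi\notin\mathbb{Q}$, the iterates $\theta+2k\alpha\bmod\pi$ are equidistributed on $[0,\pi)$ by Weyl's theorem, consistently with the countability of $\mathcal{C}(\theta)$ given by Proposition \ref{infinito}. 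Combining the $x$-fibre expansion with this $\theta$-density shows that any $S$-invariant set of positive $(m\times\mu)$-measure must be full, hence the tail $\sigma$-algebra $\bigcap_{n\geq 0}S^{-n}\mathcal{B}(\Omega)$ is trivial and $S$ is exact. The transfer-operator characterization of exactness then yields $\|S^n_\ast(m\otimes\nu)-m\times\mu\|_{TV}\to 0$, and projecting onto the $\theta$-coordinate gives $\nu^{(n)}(A)\to\mu(A)$, as claimed.
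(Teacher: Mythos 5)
Your first two steps (the reduction to exactness of the skew product $S$, the identification of $\nu^{(n)}$ with the $\theta$-marginal of $S^n_{\ast}(m\otimes\nu)$, and the invariance of $m\times\mu$) follow the same route as the paper, which at the crucial point simply cites \cite{lamb} for the fact that $S$ is an exact endomorphism when $\alpha/\pi$ is irrational. The difficulty is that your third step, the one place where you try to supply that missing argument, does not work as written. The dynamical picture inside $(0,\alpha)$ is misdescribed: $p_1\equiv 1$ there forces a single application of $T_1$, but $T_1(\theta)=\theta+2\alpha$ already lies in $(2\alpha,3\alpha)$, outside $(0,\alpha)$, where $p_1<1$ and the branches $T_3$ and $T_4$ have positive probability. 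There is no region on which $S$ acts repeatedly as the deterministic map $\theta\mapsto\theta+2\alpha\bmod\pi$; indeed the reduction mod $\pi$ used in Proposition \ref{inv} is an algebraic device and not part of the dynamics, so Weyl equidistribution of $\theta+2k\alpha$ is simply not available. What is actually true is only that the countable reachable set $\mathcal{C}(\theta)$ is dense in $(0,\pi)$ when $\alpha/\pi$ is irrational (Proposition \ref{infinito} together with the group computation in Proposition \ref{inv}), which is a much weaker statement than orbitwise equidistribution.

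More seriously, even if you could show that every $S$-invariant set of positive $(m\times\mu)$-measure is full, that establishes ergodicity, not exactness: triviality of the invariant $\sigma$-algebra does not imply triviality of the tail $\sigma$-algebra $\bigcap_{n\geq 0}S^{-n}\mathcal{B}(\Omega)$. (An irrational rotation is ergodic while its tail $\sigma$-algebra is the full Borel $\sigma$-algebra.) The ``hence'' in your last paragraph is exactly the content of \cite[Theorem~14]{lamb}, and filling it requires a genuine Rokhlin-type argument: around a.e.\ point one must produce sets of arbitrarily small diameter whose images under $S^n$ have measure bounded below, exploiting the strict expansion $1/p_k(\theta)>1$ along well-chosen admissible itineraries (via Lemma \ref{cteta}) -- noting that on $(0,\alpha)$ the fibre map is the identity, so the $x$-expansion is not uniform and cannot be invoked branch by branch. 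As it stands, the exactness step is a gap, and since everything else in your argument is routine, the proposal does not yet constitute a proof.
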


\begin{proof}
We provide only a sketch of the proof; for more details, see \cite[Equation (7.2) and Theorem 14]{lamb}.
Let $U_S: \mathcal{L}^P(\Omega)\rightarrow\mathcal{L}^P(\Omega)$ be the Koopman operator associated with the skew-product $S$, defined in \eqref{skew1}, that is, $U_S(g) = g\circ S$. Let $f \in L^1 ([0, \pi], \mu)$ be the Radon-Nikodym derivative, and let $\nu \ll \mu$. Then,
\begin{equation}\label{conv}
\nu^{(n)}(A) =\int U_S^{n}(\mathds{1}_{\pi_2^{-1}(A)})d(\lambda\times \nu) = \int \pi^*_2(f) U_S^{n}(\mathds{1}_{\pi_2^{-1}(A)})d(\lambda\times \mu) \rightarrow \mu(A),
\end{equation}
if $S$ is mixing or is an exact endomorphism. In fact, \cite[Theorem 14]{lamb} proves that $S$ is an exact endomorphism if $\frac{\alpha}{\pi}$ is an irrational number. 
\end{proof}
The convergence  $\nu^{(n)}(A) \rightarrow \mu(A)$ is called the Strong Knudsen's Law for the random map $T.$ 

\begin{remark}\label{obspipeline}
Suppose that the Strong Knudsen's Law holds for $T$. Given an initial distribution $\nu$, the angle distribution after an arbitrarily large number of collisions along the pipeline is close to to the uniform distribution $\mu$.
\end{remark}

We present at this moment a proposition that completes the Theorem \ref{teo lamb}. That is, with this proposition we have a necessary and sufficient condition for the Strong Knudsen's Law for the random map we are working on.

\begin{proposition}\label{lei racional}
Let T be the Feres Random Map, let $\mu(A)=\frac{1}{2}\int_A \sin(\theta)d\theta$, and let $\nu \ll \mu$. If $\frac{\alpha}{\pi}$ is an rational number, then the Strong Knudsen's Law does not hold.
\end{proposition}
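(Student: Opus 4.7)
The plan is to exhibit an absolutely continuous initial measure $\nu \ll \mu$ together with a measurable set $A$ of positive Liouville measure such that $\nu^{(n)}(A) = 0$ for every $n \geq 0$; this immediately rules out the convergence $\nu^{(n)}(A) \to \mu(A)$. The key leverage is Proposition \ref{inv}: when $\alpha/\pi = m/n$ is rational, every reachable set $\mathcal{C}(\theta)$ is finite. More concretely, each $T_i$ has the form $\theta \mapsto \pm\theta + c$ with $c$ a $\mathbb{Z}$-linear combination of $\pi$ and $\alpha$, so that $c \in \frac{\pi}{n}\mathbb{Z}$; an easy induction on the length of the word gives $T_{\underline{x}}^{(k)}(\theta) = \pm\theta + \frac{j}{n}\pi$ for some $j \in \mathbb{Z}$, and the constraint of landing in $(0,\pi)$ leaves at most $O(n)$ choices of $(j,\text{sign})$.

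Fix now $\delta > 0$ small, pick $\theta_0 \in (\delta, \pi-\delta)$ with $\theta_0 \neq k\alpha$ for every $k \in \mathbb{N}$, and choose an open interval $I \ni \theta_0$ contained in $(\delta, \pi-\delta) \setminus \{k\alpha : k \in \mathbb{N}\}$. By the previous observation, the set
\[
\mathcal{C}(I) \;:=\; \bigcup_{\theta \in I} \mathcal{C}(\theta)
\]
is contained in a union of $O(n)$ translated or reflected copies of $I$, so $\lambda(\mathcal{C}(I)) = O(n\,|I|)$, where $\lambda$ denotes Lebesgue measure. Shrinking $|I|$ if necessary, I can guarantee $\lambda(\mathcal{C}(I)) < \pi - 3\delta$. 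Let $\nu$ be the normalized Lebesgue measure on $I$; since $\sin \theta$ is bounded away from $0$ on $I$, we have $\nu \ll \mu$.

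From the identity $\nu^{(n)}(B) = \int K^n(\theta, B)\, d\nu(\theta)$ and the fact that $K^n(\theta, \cdot)$ is a convex combination of point masses at admissible iterates $T_{\underline{x}}^{(n)}(\theta) \in \mathcal{C}(\theta) \subset \mathcal{C}(I)$, every $B$ disjoint from $\mathcal{C}(I)$ satisfies $\nu^{(n)}(B) = 0$ for all $n$. Picking any measurable $A \subset (\delta, \pi-\delta) \setminus \mathcal{C}(I)$ with $\lambda(A) > 0$ then yields $\mu(A) > 0$ while $\nu^{(n)}(A) = 0$ for all $n$, so the Strong Knudsen's Law fails. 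The step I expect to be the main obstacle is controlling $\lambda(\mathcal{C}(I))$ uniformly in the iteration index; but this reduces, as indicated, to the purely algebraic observation that all shifts produced by iterating $T_1, T_2, T_3, T_4$ live in the lattice $\frac{\pi}{n}\mathbb{Z}$ whenever $\alpha \in \pi\mathbb{Q}$.
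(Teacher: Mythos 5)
Your argument is correct, but it takes a genuinely different and more elementary route than the paper. You directly exhibit a measure $\nu \ll \mu$ (normalized Lebesgue on a short interval $I$ away from $0$ and $\pi$) and a positive-$\mu$-measure set $A$ disjoint from the reachable set $\mathcal{C}(I)$, using the algebraic fact that for $\alpha=\tfrac{m}{n}\pi$ every admissible composition is of the form $\theta\mapsto\pm\theta+\tfrac{j}{n}\pi$, so that $\mathcal{C}(I)$ is covered by $O(n)$ reflected translates of $I$ and its Lebesgue measure can be made small by shrinking $I$; since all the mass of $\nu^{(k)}$ stays in $\mathcal{C}(I)$ for every $k$, the convergence $\nu^{(k)}(A)\to\mu(A)>0$ fails. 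The paper instead works in the skew-product $S$ on $\Omega=[0,1]\times[0,\pi]$: it shows that the union of the $n$ intervals of radius $\tfrac{\pi}{4n}$ centered at the half-lattice points $\tfrac{(2j-1)\pi}{2n}$ is invariant under all four maps $T_i$, deduces that $S$ has a nontrivial invariant set, hence is not ergodic and not exact, and then concludes via the asserted equivalence between the Strong Knudsen's Law and exactness of $S$. Both proofs rest on the same lattice observation, but yours has the advantage of being self-contained: it produces an explicit witness $(\nu,A)$ and does not rely on the unproved converse implication ``Strong Knudsen's Law $\Rightarrow$ $S$ exact'' that the paper invokes at the final step. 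Two cosmetic points you should tidy up: take $A$ in the complement of the explicit finite union of intervals covering $\mathcal{C}(I)$ (rather than of $\mathcal{C}(I)$ itself) so that measurability of $A$ is immediate, and note that the restriction $\theta_0\neq k\alpha$ is not actually needed for the counting argument.
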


\begin{proof}
Given $\alpha=\frac{m \pi}{n}$, with $\gcd(m,n)=1,$ we want to show that there is a set $B \subset \Omega$ such that $B$ is $S$-invariant and $0<(\lambda \times \mu) (B) < 1$. For this, define the following sets: $A_1=\big\{0,\frac{\pi}{n}, \frac{2\pi}{n}, \cdots, \frac{m \pi}{n}, \cdots, \frac{(n-1)\pi}{n}, \pi \big\}$ and $A_2=\big\{ \frac{\pi}{2n}, \frac{3\pi}{2n},\cdots, \frac{(2n-1)\pi}{2n} \big\}$. We observe that these sets $A_1$ and $A_2$ are invariant by the Feres Random Map, that is, if $x \in A_i,$ with $i=1,2$, then $T_k(x) \in A_i$, whatever $k=1,2,3,4$ is, since $x$ belongs to the domain of $T_k$. In fact:
\begin{itemize}
\item If $\frac{(2j-1)\pi}{2n}\in A_2$ is in the domain of $T_1$, then
$$T_1\Big(\frac{(2j-1)\pi}{2n}\Big) = \frac{(4m+2j-1)\pi}{2n} \in A_2;$$

\item If $\frac{(2j-1)\pi}{2n}\in A_2$ is in the domain of $T_2$, then
 $$T_2\Big(\frac{(2j-1)\pi}{2n}\Big) = \frac{(4n-8m-2j+1)\pi}{2n} \in A_2;$$

\item If $\frac{(2j-1)\pi}{2n}\in A_2$ is in the domain of $T_3$, then 
$$T_3\Big(\frac{(2j-1)\pi}{2n}\Big) = \frac{(2j-4m-1)\pi}{2n} \in A_2;$$

\item If $\frac{(2j-1)\pi}{2n}\in A_2$ is in the domain of $T_4$, then 
$$T_4\Big(\frac{(2j-1)\pi}{2n}\Big) = \frac{(8m-2j+1)\pi}{2n} \in A_2.$$
\end{itemize} 

Similarly, it can be shown that the set $A_1$ is invariant by the Feres Random Map. Taking $\epsilon=\frac{\pi}{4n},$ let us consider the interval $I_j=\Big(\frac{(2j-1)\pi}{2n}-\epsilon,\frac{(2j-1)\pi}{2n}+\epsilon \Big)$, with $j=1,2,\dots, n$. Therefore, the set $\cup_{j=1}^n I_j$ is invariant by the Feres Random Map, since the set $A_2$ is invariant by this map, and the maps $T_k$ are translations in $[0,\pi]$. That is, given $k \in \{1,2,3,4\}$ and $j\in \{1,2,\dots,n\}$, we have that $T_k(I_j)=I_l$, for some $l\in \{1,2,\dots,n\}.$

We also observe that the Lebesgue measure of the set $\cup_{j=1}^n I_j$ in $[0,\pi]$ satisfies $Leb\Big( \cup_{j=1}^n I_j \Big)=\frac{\pi}{2} <\pi= Leb([0,\pi])$. Finally, the set $[0,1] \times \cup_{j=1}^n I_j \subset \Omega,$ is an $S$-invariant set; and, since $\lambda \times \mu$ is absolutely continuous with respect to the Lebesgue measure in $\Omega$, it follows that $0 < (\lambda \times \mu)([0,1] \times \cup_{j=1}^n I_j) < 1$. That is, $S$ is not ergodic and, then, $S$ is not an exact endomorphism.

Therefore, for $\alpha=\frac{m\pi}{n}$, with $\gcd(m,n)=1$, we have that $\nu^{(n)} \not\to \mu$, since this convergence occurs, if and only if, $S$ is an exact endomorphism.
\end{proof}


With both Theorem \ref{teo lamb} and the Proposition \ref{lei racional}, we obtain the following theorem:

\begin{theorem} \label{lamb final}
Let $T$ be the Feres Random Map, let $\mu(A)=\int_A\sin(\theta)d\theta$, and let $\nu \ll \mu$. Then, $\nu^{(n)}(A) \to \mu(A)$ for all $A \in \mathcal{B}([0,\pi])$, if and only if, $\frac{\alpha}{\pi}$ is an irrational number. 
\end{theorem}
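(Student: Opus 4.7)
The plan is to observe that Theorem \ref{lamb final} is simply the synthesis of the two immediately preceding results, Theorem \ref{teo lamb} and Proposition \ref{lei racional}, together with the trivial dichotomy that $\frac{\alpha}{\pi}$ is either rational or irrational. So the proof amounts to a short bookkeeping argument with essentially no new content.

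For the implication ($\Leftarrow$), I would just invoke Theorem \ref{teo lamb}: if $\frac{\alpha}{\pi}$ is irrational, then the associated skew-product $S$ in \eqref{skew1} is an exact endomorphism, and formula \eqref{conv} gives $\nu^{(n)}(A) \to \mu(A)$ for every $\nu \ll \mu$ and every $A \in \mathcal{B}([0,\pi])$. No additional work is required beyond citing the previously established exactness.

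For the implication ($\Rightarrow$), I would argue by contrapositive. Suppose $\frac{\alpha}{\pi}$ is rational, say $\alpha = \frac{m\pi}{n}$ with $\gcd(m,n)=1$. Then Proposition \ref{lei racional} produces an explicit $S$-invariant set $[0,1]\times\bigcup_{j=1}^{n} I_j$ of $(\lambda \times \mu)$-measure strictly between $0$ and $1$, so $S$ is not even ergodic, let alone exact. Reading \eqref{conv} in the opposite direction, the failure of exactness for $S$ forces the existence of some $\nu \ll \mu$ and some $A \in \mathcal{B}([0,\pi])$ for which $\nu^{(n)}(A) \not\to \mu(A)$. Combining the two implications closes the equivalence.

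There is no real obstacle: the hard analytic work (exactness in the irrational case, and the construction of the invariant set $\bigcup_{j} I_j$ in the rational case) has already been carried out. The only thing to watch for is the minor notational slip in the statement of Theorem \ref{lamb final}, where $\mu$ is written as $\int_A \sin\theta\,d\theta$ rather than $\frac{1}{2}\int_A \sin\theta\,d\theta$; I would simply normalize $\mu$ to match the measure used in Theorem \ref{teo lamb} and Proposition \ref{lei racional}, since the normalization constant does not affect either the absolute continuity hypothesis $\nu \ll \mu$ or the convergence statement.
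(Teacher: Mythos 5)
Your proposal is correct and matches the paper exactly: the paper derives Theorem \ref{lamb final} by simply combining Theorem \ref{teo lamb} (irrational case, via exactness of the skew-product) with Proposition \ref{lei racional} (rational case, via the invariant set $[0,1]\times\bigcup_j I_j$), which is precisely your two-implication bookkeeping argument. Your remark about the missing factor $\frac{1}{2}$ in the normalization of $\mu$ is a fair catch of a typo and does not affect the argument.
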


\section{The Random Billiard Map}\label{sec4}
Let $\overline{T}:[0,L] \times (0,\pi) \rightarrow [0,L]\times (0,\pi)$, where $\overline{T}(s,\theta)=(s,T_i(\theta))$ with probability $p_i(\theta)$, be the random map that extends the Feres Random Map $T(\theta) = T_i(\theta)$ with probability $p_i(\theta).$ Consider a curve in the Euclidean plane and the corresponding deterministic billiard map $F: [0,L)\times (0,\pi) \rightarrow [0,L)\times (0,\pi)$ associated with this curve. We have that $F(s,\theta)=(s_1(s,\theta), \theta_1(s,\theta))$ preserves the measure $\lambda \times \mu$, where $\lambda$ is the normalized Lebesgue measure at $[0, L)$, and $\mu(A)=\frac{1}{2}\int_A\sin(\theta)d\theta.$

\begin{definition}
Consider a deterministic billiard map $F$. We define the random billiard map $\overline{F}: [0,L) \times (0,\pi)\rightarrow [0,L)\times (0,\pi)$ by
\begin{equation}\label{random}
\overline{F}(s,\theta)= F\circ\overline{T}(s,\theta)=\Big(s_1(s,T_i(\theta)),\theta_1(s,T_i(\theta))\Big)
\end{equation}
with probability $\overline{p}_i(s,\theta)=p_i(\theta).$
\end{definition}

\begin{example}
Let $F(s,\theta)=(s+2\theta\mod 2\pi,\theta)$ be the deterministic billiard map in the circle. Then, the random billiard map in the circle is $\overline{F}(s,\theta)= F \circ \overline{T}(s,\theta)=(s+2T_i(\theta) \mod 2\pi, T_i(\theta))$ with probability $p_i(\theta).$
\end{example}

We observe that the projection in the second coordinate of the iterates of the map $S$ represents the outgoing angles of the particle moving in a pipeline. Thus, on both billiard tables (infinite pipeline and circle), we have the same behavior of the orbits referring to the angles (see Figure \ref{pipeline}). That is, the outgoing angles do not depend on the collision points, and are given by the compositions of the $T_i$ maps. 

\begin{figure}[!htb]
\centering
\includegraphics[scale=0.2]{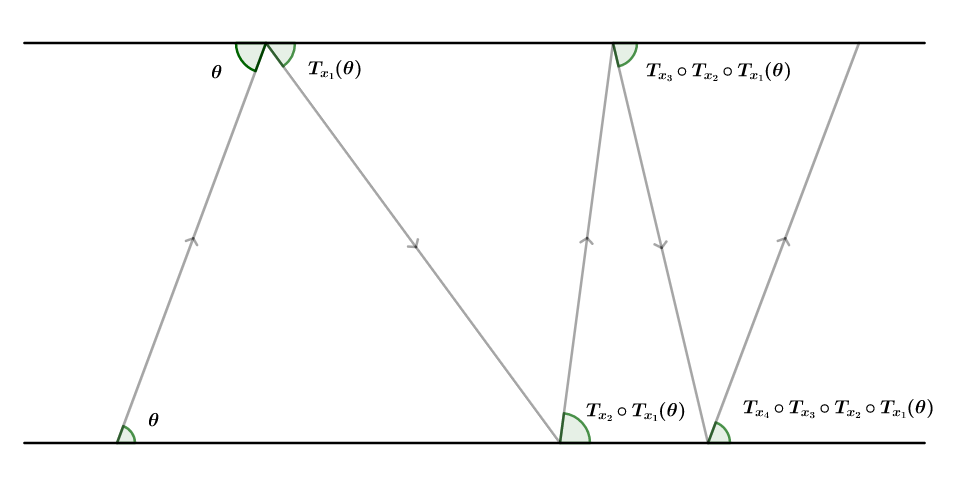}
\includegraphics[scale=0.15]{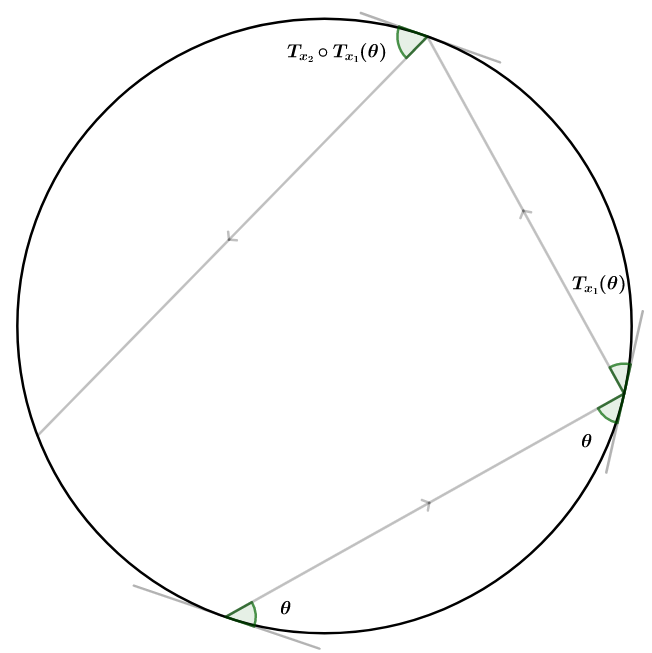}
\caption{Left, it is a random billiard in the infinite pipeline. Right, it is a random billiard in the circle.}\label{pipeline}
\end{figure}

So we can enunciate Theorem \ref{lamb final} in the language of random billiards as follows:

\begin{theorem}\label{prop4.4}
Consider a particle moving in a circle (or infinite pipeline) and let $T$ be the Feres Random Map. Given a measure $\nu \ll \mu$, where $\mu(A) = \frac{1}{2}\int \sin(\theta)d\theta$, we have that $\nu^{(n)}(A) \to \mu(A)$ for all $A \in \mathcal{B}([0,\pi])$, if and only if, $\frac{\alpha}{\pi}$ is an irrational number.
\end{theorem}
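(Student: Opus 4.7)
The plan is to reduce the statement directly to Theorem \ref{lamb final}, which already settles the question for the pure Feres Random Map $T$ acting on $[0,\pi]$. The only nontrivial observation needed is that, for both tables under consideration (the circle and the infinite pipeline), the angle coordinate of $\overline{F}$ evolves according to $T$ with no feedback from the position coordinate.

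First I would examine the definition $\overline{F}(s,\theta) = F\circ\overline{T}(s,\theta) = \bigl(s_1(s,T_i(\theta)), T_i(\theta)\bigr)$ with probability $\overline{p}_i(s,\theta)=p_i(\theta)$. Let $\pi_2$ denote the projection onto the angle coordinate. The formula shows that $\pi_2\bigl(\overline{F}(s,\theta)\bigr)=T_i(\theta)$ with probability $p_i(\theta)$, which is by construction one step of the Feres Random Map $T$; crucially the probability and the image depend only on $\theta$, not on $s$. A short induction on $n$ then yields that the distribution of $\pi_2\bigl(\overline{F}^{(n)}(s,\theta)\bigr)$ coincides with that of $T^{(n)}(\theta)$, so the transition kernel of the billiard process on the $\theta$-marginal equals the kernel $K$ of $T$ from equation \eqref{nucleo}.

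Consequently, if $\nu$ is a measure on $[0,\pi]$ with $\nu\ll\mu$, the iterated distributions $\nu^{(n)}$ produced by the random billiard on either table are identical to those produced by $T$ alone. Applying Theorem \ref{lamb final} gives the equivalence $\nu^{(n)}(A)\to\mu(A)$ for all $A\in\mathcal{B}([0,\pi])$ iff $\alpha/\pi\notin\mathbb{Q}$.

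There is essentially no real obstacle in the argument: the content has already been carried by Theorem \ref{teo lamb} and Proposition \ref{lei racional}. The only point requiring care is the clean verification that the decoupling between $s$ and $\theta$ in $\overline{F}$ is the \emph{same} for the circle and the pipeline, so that the single reduction to $T$ covers both cases stated in the theorem; this is immediate from $\overline{T}(s,\theta)=(s,T_i(\theta))$ with probability $p_i(\theta)$ independent of $s$, together with the fact that the deterministic map $F$ only acts on the angle in the circle case and on the position offset in the pipeline case.
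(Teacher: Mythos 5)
Your proposal is correct and follows the paper's own argument: both observe that the angle coordinate of $\overline{F}$ evolves exactly as the Feres Random Map $T$, with no dependence on the position coordinate, so the $\theta$-marginal distributions $\nu^{(n)}$ coincide with those of $T$ and Theorem \ref{lamb final} applies directly. Your added induction on $n$ and the explicit identification of the transition kernels merely make explicit what the paper states in one line.
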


\begin{proof}
Consider the random billiard map $\overline{F}$ in the circle or the infinite pipeline. Notice that the projection on the second coordinate of the iterates of the random billiard map $\overline{F}$ represents the Feres Random Map since the outgoing angles in these two cases do not depend on the particle's position (see Figure \ref{pipeline}).  So, by Theorem \ref{lamb final}, we have the Strong Knudsen's Law if and only if $\frac{\alpha}{\pi}$ is an irrational number. 
\end{proof}

Remark \ref{obspipeline} also applies to the particle colliding on a circular table.


\subsection{Invariant Measure For The Random Billiard Map} \label{secmedida}

Given a deterministic billiard map $F$, the associated random billiard map is given by $\overline{F}(s,\theta) = F(s,T_i(\theta))$ with probability $p_i(\theta)$. Consider $\overline{T}_i(s,\theta)=(s,T_i(\theta))$ and $\overline{F}_i(s,\theta)=F\circ \overline{T}_i(s,\theta)$, for each $i=1,\dots,4$. By Definition \ref{medida invariante}, a measure $\nu$ is invariant for the random billiard map $\overline{F}$ if $$\nu(A)  = \sum_{i=1}^4 \iint _{\overline{F}^{-1}_i(A)}p_i(\theta)d\nu(s,\theta)$$ for every Borel set $A$ in $(0,L) \times (0,\pi).$

\begin{lemma}\label{Ti}
The measure $\lambda \times \mu$ preserved by deterministic billiard map $F$ is an invariant measure for the random map $\overline{T}(s,\theta)=(s, T_i(\theta))$ with probability $p_i(\theta).$
\end{lemma}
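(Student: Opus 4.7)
The plan is to unwind the definition of invariance for a random map (Definition \ref{medida invariante}) applied to $\overline{T}$, and reduce it to the already-known $T$-invariance of $\mu$ (the Feres proposition) by a Fubini/product-set argument.

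First I would verify the invariance equation on rectangles. Let $A = B \times C$ with $B \in \mathcal{B}([0,L))$ and $C \in \mathcal{B}((0,\pi))$. Because $\overline{T}_i(s,\theta) = (s, T_i(\theta))$ acts trivially on the first coordinate, we have
\[
\overline{T}_i^{-1}(B \times C) = B \times T_i^{-1}(C).
\]
The weight $p_i(\theta)$ depends only on $\theta$, so Tonelli's theorem gives
\[
\sum_{i=1}^{4} \iint_{\overline{T}_i^{-1}(B \times C)} p_i(\theta)\, d(\lambda \times \mu)(s,\theta) = \lambda(B) \sum_{i=1}^{4} \int_{T_i^{-1}(C)} p_i(\theta)\, d\mu(\theta).
\]

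Next I would invoke the Feres proposition, which states that $\mu$ is $T$-invariant, i.e.
\[
\mu(C) = \sum_{i=1}^{4} \int_{T_i^{-1}(C)} p_i(\theta)\, d\mu(\theta).
\]
Substituting, the right-hand side equals $\lambda(B)\mu(C) = (\lambda \times \mu)(B \times C)$, which establishes the invariance identity on the $\pi$-system of measurable rectangles.

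Finally, since measurable rectangles form a $\pi$-system generating the product $\sigma$-algebra on $[0,L) \times (0,\pi)$, and both sides of the invariance relation define finite measures on this $\sigma$-algebra (the right-hand side is a measure because $\sum_i p_i = 1$ and each term is a pullback measure under $\overline{T}_i$ weighted by $p_i$), a standard Dynkin $\pi$-$\lambda$ argument extends the equality from rectangles to every Borel set $A \subset [0,L) \times (0,\pi)$. I do not anticipate a real obstacle here: the only subtlety is noting that the separability of the $\theta$-dependence of $p_i$ is exactly what lets the $\lambda$-integration factor out, so that the one-dimensional Feres invariance suffices.
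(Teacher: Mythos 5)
Your proposal is correct and follows essentially the same route as the paper: verify the invariance identity on measurable rectangles by factoring out the $\lambda$-integral and invoking the $T$-invariance of $\mu$ from the Feres proposition. The only difference is that you explicitly add the $\pi$-$\lambda$ extension to general Borel sets, which the paper leaves implicit.
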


\begin{proof}
The measure $\lambda \times \mu$ is invariant for $\overline{T}$ if $$\sum_{i=1}^4 \int \int \mathds{1}_{A\times B}(\overline{T_i}(s,\theta))p_i(\theta)d\lambda(s)d\mu(\theta) = (\lambda \times \mu)(A \times B)$$
for all $A\times B \in \mathcal{B}([0,L]\times [0,\pi]).$ Indeed, notice that   
\begin{align*}
 \sum_{i=1}^4 \int \int \mathds{1}_{A \times B}({\overline{T_i}}(s,\theta))p_i(\theta)d\lambda(s) d\mu(\theta)  
& = \sum_{i=1}^4 \int \int \mathds{1}_{A}(s) \mathds{1}_{B}(T_i(\theta)) p_i(\theta)d\lambda(s) d\mu(\theta)
\\ & = \lambda(A) \sum_{i=1}^4 \int \mathds{1}_{B}(T_i(\theta))p_i(\theta) d\mu(\theta) 
\\ & =  \lambda(A)\mu(B).
\end{align*}
\end{proof}

\begin{proposition}\label{prop4.5}
Given a deterministic billiard map $F$ with an invariant measure $\lambda \times \mu$, and the random map $\overline{T}(s,\theta)=(s, T_i(\theta))$ with probability $p_i(\theta)$, we have that the random billiard map $\overline{F}$ has $\lambda \times \mu$ as an invariant measure.
\end{proposition}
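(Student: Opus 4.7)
The plan is to reduce the invariance of $\lambda\times\mu$ under $\overline{F}$ to the two facts already available: Lemma \ref{Ti}, which gives the invariance of $\lambda\times\mu$ under the ``angle-only'' random map $\overline{T}$, and the deterministic invariance $(\lambda\times\mu)\circ F^{-1}=\lambda\times\mu$. Since $\overline{F}_i=F\circ\overline{T}_i$, for any Borel set $A\subset[0,L)\times(0,\pi)$ we have $\overline{F}_i^{-1}(A)=\overline{T}_i^{-1}(F^{-1}(A))$, so the invariance identity we must check,
\[
(\lambda\times\mu)(A)=\sum_{i=1}^{4}\iint_{\overline{F}_i^{-1}(A)} p_i(\theta)\,d(\lambda\times\mu)(s,\theta),
\]
can be rewritten, after unfolding the indicator functions, as
\[
\sum_{i=1}^{4}\iint \mathds{1}_{F^{-1}(A)}\bigl(\overline{T}_i(s,\theta)\bigr)\,p_i(\theta)\,d(\lambda\times\mu)(s,\theta)=(\lambda\times\mu)(A).
\]

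First I would apply Lemma \ref{Ti} to the set $B=F^{-1}(A)$: the left-hand side above collapses to $(\lambda\times\mu)(F^{-1}(A))$. Then I would invoke the deterministic invariance of $F$ to replace $(\lambda\times\mu)(F^{-1}(A))$ by $(\lambda\times\mu)(A)$, which closes the argument.

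The one subtlety to address is that Lemma \ref{Ti} is proven only for rectangles $A\times B$, whereas here the set $F^{-1}(A)$ need not be a product. This is a routine extension: both sides of the identity in Lemma \ref{Ti} define finite (pre-)measures on $\mathcal{B}([0,L)\times(0,\pi))$ that agree on the $\pi$-system of measurable rectangles, so by Dynkin's $\pi$-$\lambda$ theorem they agree on the generated $\sigma$-algebra. I would either insert this remark immediately before applying Lemma \ref{Ti}, or equivalently state a short corollary of Lemma \ref{Ti} saying $\sum_i\iint \mathds{1}_B(\overline{T}_i(s,\theta))p_i(\theta)\,d(\lambda\times\mu)=(\lambda\times\mu)(B)$ for every Borel $B$.

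The expected main obstacle is essentially conceptual rather than technical: making sure the composition order in $\overline{F}=F\circ\overline{T}$ is used correctly, so that the selection probabilities $p_i(\theta)$ are read off from the pre-collision angle $\theta$ (not from the post-collision one), which is exactly what the definition prescribes and what makes the reduction to Lemma \ref{Ti} clean. Once that bookkeeping is right, the proof is a two-line chain of equalities.
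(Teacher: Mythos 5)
Your proof is correct and follows essentially the same route as the paper's: rewrite $\overline{F}_i^{-1}(A)=\overline{T}_i^{-1}(F^{-1}(A))$, apply Lemma \ref{Ti} to the set $F^{-1}(A)$, and finish with the deterministic invariance of $F$. Your extra remark extending Lemma \ref{Ti} from rectangles to arbitrary Borel sets via the $\pi$-$\lambda$ theorem is a welcome addition, since the paper's own proof applies the lemma to $F^{-1}(A\times B)$, which is generally not a product set, without comment.
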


\begin{proof}
By Lemma \ref{Ti}, $(\lambda \times \mu)(A \times B) = \sum_{i=1}^N \int \int \mathds{1}_{\overline{T_i}^{-1}(A \times B)}(s,\theta)p_i(\theta) d\lambda \times \mu.$ Due to the invariance of the measure $\lambda \times \mu$ for the deterministic billiard map $F,$ we have that $(\lambda \times \mu)(F^{-1}(A \times B)) = (\lambda \times \mu)(A \times B)$. Therefore,
\begin{align*}
 \sum_{i=1}^4 \int \int \mathds{1}_{\overline{F}_i^{-1}(A \times B)}(s,\theta)p_i(\theta)d\lambda(s) d\mu(\theta)  & = \sum_{i=1}^4 \int \int \mathds{1}_{\overline{T_i}^{-1}\circ F^{-1}(A \times B)} p_i(\theta)d\lambda(s) d\mu(\theta)
\\ & = (\lambda \times \mu)(F^{-1}(A \times B))
\\& = (\lambda \times \mu)(A \times B).
\end{align*}
\end{proof}

\begin{remark}
We can also think of random billiard maps as $\overline{F}(s,\theta) = \overline{T} \circ F(s,\theta)$ with probability $\overline{p}_i(s,\theta) = p_i(F(s,\theta))=p_i(\theta_1(s,\theta))$. We show next that this composition also has the same invariant measure $\lambda \times \mu$ as the deterministic billiard maps.
\end{remark}

\begin{proposition} \label{prop4.6}
Let $F$ be a deterministic billiard map with an invariant measure $\lambda \times \mu$, and let $\overline{T}(s,\theta)=(s, T_i(\theta))$ be the random map with probability $p_i(\theta)$. Then, the random billiard map $\overline{F}(s,\theta)=\overline{T}_i \circ F(s,\theta)$ with probability $p_i(F(s,\theta))$ has $\lambda \times \mu$ as an invariant measure.
\end{proposition}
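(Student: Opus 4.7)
The plan is to verify the invariance identity from Definition \ref{medida invariante} for $\overline{F}$ by exploiting the opposite composition order: here $\overline{F}_i = \overline{T}_i \circ F$, so $\overline{F}_i^{-1}(A) = F^{-1}\bigl(\overline{T}_i^{-1}(A)\bigr)$, and the weight $p_i(F(s,\theta)) = p_i(\pi_2(F(s,\theta)))$ also factors through $F$. Consequently each term in the sum to be tested is the integral of $g_i \circ F$ for an appropriate $g_i$. I would first pull $F$ out using the $F$-invariance of $\lambda\times\mu$, and then apply Lemma \ref{Ti} to finish.

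Concretely, after rewriting the right-hand side of the invariance condition as
\begin{equation*}
\sum_{i=1}^{4}\iint \mathds{1}_A\bigl(\overline{T}_i(F(s,\theta))\bigr)\, p_i\bigl(\pi_2(F(s,\theta))\bigr)\, d\lambda(s)\, d\mu(\theta),
\end{equation*}
I set $g_i(s',\theta') := \mathds{1}_A(\overline{T}_i(s',\theta'))\, p_i(\theta')$, so each integrand is $g_i\circ F$. Using $F$-invariance term by term yields
\begin{equation*}
\sum_{i=1}^{4}\iint \mathds{1}_A\bigl(\overline{T}_i(s',\theta')\bigr)\, p_i(\theta')\, d\lambda(s')\, d\mu(\theta'),
\end{equation*}
which by Lemma \ref{Ti} equals $(\lambda\times\mu)(A)$, completing the verification.

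The step that will require the most care is the change of variables by $F$. Since $F$ is only defined almost everywhere on a general billiard table, I would appeal to the measure-theoretic identity $\int h\,d(\lambda\times\mu) = \int h\circ F\,d(\lambda\times\mu)$ valid for any nonnegative Borel measurable $h$, rather than attempting a pointwise substitution. Aside from this bookkeeping, the argument is essentially a mirror image of Proposition \ref{prop4.5}: there $\overline{T}$-invariance is applied first and $F$-invariance second, whereas here the order is reversed.
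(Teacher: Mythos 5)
Your proposal is correct and follows essentially the same route as the paper's proof: recognize that each term of the invariance sum is of the form $(p_i\cdot(\varphi\circ\overline{T}_i))\circ F$, pull out $F$ by $F$-invariance of $\lambda\times\mu$, and conclude with Lemma~\ref{Ti}. The only cosmetic difference is that you test against indicator functions while the paper works with a general integrable $\varphi$.
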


\begin{proof}
By Lemma \ref{Ti}, $$(\lambda \times \mu)(\varphi) = \sum_{i=1}^n \int\int p_i(\theta) \cdot \varphi \circ \overline{T}_i(s,\theta) \; d\lambda(s) \; d\mu(\theta)$$ for all $\varphi$ integrable.
Then, we have that  
\begin{align*}
& \sum_{i=1}^4 \int \int \overline{p}_i \cdot \varphi \circ (\overline{T}_i \circ F) \; d\lambda \; d\mu = \sum_{i=1}^4 \int \int p_i \circ F \cdot \varphi \circ (\overline{T}_i \circ F) \; d\lambda \; d\mu   
\\ & = \sum_{i=1}^4 \int \int \big(p_i \cdot (\varphi \circ \overline{T}_i) \big) \circ F \; d\lambda \; d\mu = \sum_{i=1}^4 \int \int p_i \cdot \varphi \circ \overline{T}_i \; d\lambda \; d\mu
\\ & = (\lambda \times \mu)(\varphi).
\end{align*}  
Thus, the measure $\lambda \times \mu$ is invariant for the random billiard map $\overline{F}$.   
\end{proof}

Propositions \ref{prop4.5} and \ref{prop4.6}, ensure that the measure preserved by the deterministic billiard map is an invariant measure for the random billiard map for both compositions: $\overline{F}(s,\theta) = F \circ \overline{T}_i(s,\theta)$ with probability $p_i(\theta)$, and $\overline{F}(s,\theta) = \overline{T}_i\circ F(s,\theta)$ with probability $p_i(F(s,\theta))$.

\section{Markov Chain}
The random map $T$, defined in \eqref{M1}, can be seen as a Markov chain with state space $\mathcal{C}(\theta)$, and the transition probabilities are those generated by the respective probabilities $p_1$, $p_2$, $p_3$, and $p_4$.

\begin{example}
Let $\alpha = \frac{\pi}{7}$, and let $\theta \in (0,\alpha)$. The state space for the Markov chain is $\mathcal{C}(\theta)=\{\theta, \theta+2\alpha, \theta+4\alpha, \theta+6\alpha, -\theta+2\alpha, -\theta+4\alpha, -\theta+6\alpha \}$. 

\begin{figure}[!htb]
\centering
\includegraphics[scale=0.2]{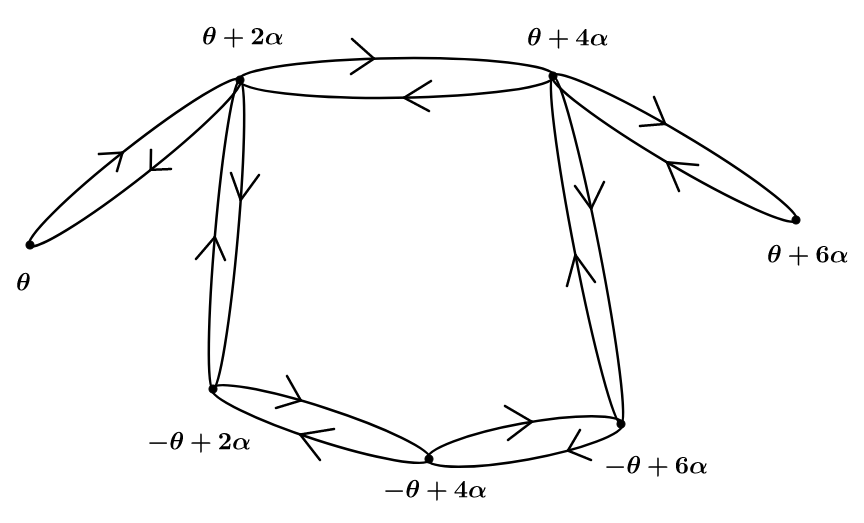}
\caption{Markov chain whose vertices are the images of $\mathcal{C}(\theta)$ with $\alpha=\frac{\pi}{7}$ and $\theta \in (0,\alpha).$}\label{fig2}
\end{figure}

Moreover, the transition probability matrix for this Markov chain is:
{\tiny
$$A = \left[
\begin{array}{ccccccc}
0 & p_1(\theta) & 0 & 0 & 0 & 0 & 0 \\
p_3(\theta+2\alpha) & 0 & p_1(\theta+2\alpha) & 0 & p_4(\theta+2\alpha) & 0 & 0 \\
0 & p_3(\theta+4\alpha) & 0 & p_1(\theta+4\alpha) & 0 & 0 & p_2(\theta+4\alpha) \\
0 & 0 & p_3(\theta+6\alpha) & 0 & 0 & 0 & 0 \\
0 & p_4(-\theta+2\alpha) & 0 & 0 & 0 & p_1(-\theta+2\alpha) & 0 \\
0 & 0 & 0 & 0 & p_3(-\theta+4\alpha) & 0 & p_1(-\theta+4\alpha) \\
0 & 0 & p_2(-\theta+6\alpha) & 0 & 0 & p_3(-\theta+6\alpha) & 0
\end{array}
\right].$$}
\end{example}

Let $(X,\mathcal{A})$ be a measurable space with $X=\{ x_1, x_2, \dots , x_r, \dots \}$ finite or countably infinite. Consider $\Sigma = X^{\mathbb{N}}$ the space of the sequences in $X$ endowed with the $\sigma$-algebra product. Let $\sigma: \Sigma \rightarrow \Sigma$ be the shift operator, that is, such that $\sigma(x_n)_n = (x_{n+1})_n$. Given a family of transition probabilities $\{p_{x_ix_j} : i,j =1,2,\dots,r, \dots \},$ we define the transition matrix $(p_{x_ix_j})$ with $\sum_j p_{x_ix_j} = 1$. This matrix is called the stochastic matrix, and we denote it by $P$.

A measure $\mu$ in $X$ is completely characterized by the values $\mu_i = \mu(x_i)$, with\break $i \in \{1,2, \dots, r, \dots \} .$ We say that $\mu$ is a stationary measure for the Markov chain if it satisfies $\sum_{i} \mu_i p_{x_ix_j} = \mu_j$, for every $j$.
If the stationary measure $\mu$ is a probability, then we say that $\mu$ is a stationary distribution for the Markov chain.

We define the Markov measure as $\nu (m; x_m, \dots, x_n) = \mu_{x_m} p_{x_m, x_{m+1}} \dots p_{x_{n-1}, x_n}.$
Consider the set $\Sigma_P$ of all sequences $\underline{x}=(x_n)_n$ in $\Sigma$ such that $p_{x_n, x_{n+1}} > 0$, for all $n \in \mathbb{N}$. We say that the sequences $\underline{x} \in \Sigma_P$ are admissible sequences.  

\begin{definition}
The stochastic matrix $P$ is irreducible if, for all $x_i, x_j$, there is an $n > 0$ such that $p^n_{x_i, x_j} > 0.$
\end{definition}

In other words, $P$ is irreducible if it is possible to move from any state $x_i$ to any state $x_j$ in a certain number $n$ of steps that depends on $i$ and $j.$

\begin{theorem}
Let $P$ be the stochastic matrix of a Markov chain.
\begin{enumerate}
\item[(i)]The Markov shift $(\sigma, \nu)$ with finite symbols is ergodic, if and only if the stochastic matrix $P$ is irreducible.\label{markov}
\item[(ii)]The Markov shift $(\sigma, \nu)$ with countably infinite symbols is ergodic, if and only if there is a stationary distribution $\mu$, and $P$ is irreducible.  \label{shift enumeravel} 
\end{enumerate}
\end{theorem}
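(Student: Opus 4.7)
The plan is to reduce both parts to the Ces\`aro convergence of transition probabilities
\begin{equation*}
\frac{1}{N}\sum_{n=0}^{N-1}p^{n}_{x_i,x_j}\longrightarrow \mu_{x_j},
\end{equation*}
which holds under the respective irreducibility hypotheses, and then to use this to verify (or contradict) a standard mixing-in-the-mean criterion for ergodicity applied to the generating semi-algebra of cylinder sets.

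For the implication ``$P$ irreducible $\Rightarrow$ $(\sigma,\nu)$ ergodic'' in (i), I would first recall that a finite irreducible stochastic matrix admits a unique stationary distribution $\mu$, which is strictly positive, and that the Ces\`aro averages of $p^{n}_{x_i,x_j}$ converge to $\mu_{x_j}$ regardless of periodicity (Perron--Frobenius). Since every shift-invariant measurable set can be approximated in $\nu$-measure by finite unions of cylinders, it suffices to verify
\begin{equation*}
\frac{1}{N}\sum_{n=0}^{N-1}\nu(A\cap\sigma^{-n}B)\longrightarrow\nu(A)\nu(B)
\end{equation*}
for cylinders $A=[y_0,\dots,y_p]$ and $B=[z_0,\dots,z_q]$. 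Expanding via the Markov property reduces this to the Ces\`aro limit of $p^{\,n-p}_{y_p,z_0}$, which yields the claim. For the reverse implication I argue by contrapositive: if $P$ is reducible, decompose the states into communicating classes, pick a closed proper subset $C\subsetneq X$, and take the set of admissible sequences whose entries eventually lie in $C$. This set is shift-invariant with $\nu$-measure strictly between $0$ and $1$.

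For part (ii), the same template applies, but the countable setting forces two modifications. First, the additional hypothesis that a stationary distribution exists is essential on its own: an irreducible countable chain may be transient or null recurrent, in which case no stationary probability exists and the Markov measure $\nu$ is not even defined. Under the joint hypothesis of irreducibility plus a stationary $\mu$, the chain is positive recurrent, $\mu$ is automatically unique and fully supported, and the Markov chain ergodic theorem yields the same Ces\`aro convergence; the cylinder-approximation argument then transfers verbatim. For the converse direction, shift-invariance of $\nu$ gives at once that the one-dimensional marginal is a stationary distribution, and reducibility produces a nontrivial invariant set exactly as in (i).

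The main obstacle I anticipate lies in the countable case: one must check that finite unions of cylinders remain dense in $L^{1}(\nu)$ despite the infinite state space, and one must justify that the Ces\`aro limit interchanges properly with the sum over initial/terminal symbols when bounding $\nu(A\cap\sigma^{-n}B)$ by a dominated-convergence argument using $\mu$. Beyond this, the only other delicate point is the Markov chain ergodic theorem in the positive-recurrent countable setting, which I would cite rather than reprove.
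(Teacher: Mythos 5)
Your plan is essentially the standard textbook argument, which is exactly what the paper relies on: the paper gives no proof of its own here, simply citing Viana--Oliveira for (i) and Durrett for (ii), and those references prove the statement the way you propose (Ces\`aro convergence of $p^{n}_{x_i,x_j}$ to $\mu_{x_j}$, then the mixing-in-the-mean criterion on the semi-algebra of cylinders). The forward directions are sound as sketched; your worry about interchanging the Ces\`aro limit with sums over symbols is a non-issue for individual cylinders, since $\nu(A\cap\sigma^{-n}B)$ is a single product involving one factor $p^{\,n-p}_{y_p,z_0}$. The only point to tighten is the contrapositive: if $P$ is reducible but the stationary distribution $\mu$ happens to charge only one closed communicating class, the invariant set you build (sequences eventually entering a closed class $C$) can have $\nu$-measure $0$ or $1$, so the ``only if'' direction requires the usual convention that $\mu$ is fully supported (equivalently, that states of zero stationary mass are discarded); this caveat is implicit in the theorem as stated, and only the forward implication is actually used elsewhere in the paper.
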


\begin{proof}
For $(i)$, see \cite{marceloviana}, and for $(ii)$, see \cite[Example 7.1.7]{Durrett}.
\end{proof}

\begin{proposition}\label{markovrandom}
Given $\theta \in (0,\pi)$, define $\Sigma = \cup_{\theta' \in \mathcal{C}(\theta)} \Sigma_{\theta'}$ with $\Sigma_{\theta'}$ as in the Definition \ref{sigmatheta}. If $\frac{\alpha}{\pi}$ is a rational number, then the Markov shift in $\Sigma$ is ergodic. Moreover, if $\frac{\alpha}{\pi}$ is an irrational number and the Markov chain is aperiodic, then the Markov shift in $\Sigma$ is ergodic.
\end{proposition}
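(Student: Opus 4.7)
The argument splits into the two cases distinguished by Propositions \ref{inv} and \ref{infinito}, each calling on a different item of Theorem \ref{markov}. In both cases the main structural tool is Lemma \ref{cteta}, which carries any non-grid angle in $\mathcal{C}(\theta)$ into the interval $(0,\alpha)$ along a positive-probability admissible path. Combined with the involution relations $T_1\circ T_3 = T_3\circ T_1 = \mathrm{Id}$, $T_2\circ T_2=\mathrm{Id}$ and $T_4\circ T_4=\mathrm{Id}$ highlighted in the remark after Definition \ref{def1}, these paths can be reversed step by step, so that given any two states $\theta',\theta''\in\mathcal{C}(\theta)$ one can concatenate a positive-probability passage from $\theta'$ through $(0,\alpha)$ to $\theta''$. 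This already yields irreducibility of the stochastic matrix $P$ indexed by $\mathcal{C}(\theta)$ in both the rational and the irrational cases.

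For the rational case, Proposition \ref{inv} gives that $\mathcal{C}(\theta)$ is finite; with irreducibility in hand, Theorem \ref{markov}(i) immediately yields ergodicity of the Markov shift on $\Sigma$, and no further work is needed.

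For the irrational case, Proposition \ref{infinito} gives that $\mathcal{C}(\theta)$ is countably infinite, and Theorem \ref{markov}(ii) additionally demands the existence of a stationary distribution for $P$. My plan here is to extract such a distribution from the $T$-invariance of $\mu(d\theta)=\frac{1}{2}\sin\theta\,d\theta$ established in the proposition of Section 3.1: disintegrating the $\mu$-invariance along the countable orbit $\mathcal{C}(\theta)$ should produce positive weights $\{\mu_i\}$ satisfying $\sum_i \mu_i\, p_{ij}=\mu_j$, and the aperiodicity hypothesis combined with the irreducibility proved above then permits invoking positive recurrence to normalize these weights into a probability. The main obstacle I expect is precisely this last step: for a countably infinite irreducible aperiodic chain, positive recurrence is not automatic (null-recurrent irreducible aperiodic examples exist), so the argument must genuinely exploit the structure inherited from the Feres map, namely that each $T_i$ acts isometrically on the compact interval $[0,\pi]$ while preserving a measure of bounded density, in order to rule out escape to the boundary and certify summability of the candidate stationary weight.
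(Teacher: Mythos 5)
Your overall architecture (irreducibility plus Theorem 5.2(i) in the rational case, irreducibility plus a stationary distribution plus Theorem 5.2(ii) in the irrational case) is the same as the paper's, but both of your key steps have gaps. First, the irreducibility argument: routing every state through $(0,\alpha)$ via Lemma \ref{cteta} and then reversing does not connect two arbitrary states $\theta',\theta''\in\mathcal{C}(\theta)$, because the interval $(0,\alpha)$ is not a single state of the chain. Lemma \ref{cteta} lands $\theta'$ at some point of $(0,\alpha)$ and $\theta''$ at some \emph{other} point of $(0,\alpha)$; when $\alpha/\pi$ is irrational, $\mathcal{C}(\theta)\cap(0,\alpha)$ is in fact infinite, so concatenating a forward path with a reversed path through this interval proves nothing unless the two landing points coincide. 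The paper avoids this entirely: by Remark \ref{obs0}, $\theta'\in\mathcal{C}(\theta)$ forces $\mathcal{C}(\theta')=\mathcal{C}(\theta)\ni\theta''$, and then the very definition of $\mathcal{C}(\theta')$ supplies a finite positive-probability path from $\theta'$ to $\theta''$. That is the argument you should use; Lemma \ref{cteta} is not needed here.

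Second, and more seriously, your plan for the stationary distribution in the countably infinite case cannot be completed as described. Disintegrating the invariance of $\mu(d\theta)=\tfrac12\sin\theta\,d\theta$ along $\mathcal{C}(\theta)$ does produce stationary weights — the pointwise form of the invariance identity gives $\mu_{\theta'}=\sin\theta'$ with $\sum_{\theta'}\mu_{\theta'}p_{\theta'\theta''}=\mu_{\theta''}$ — but these weights are \emph{not} summable: when $\alpha/\pi$ is irrational, $\mathcal{C}(\theta)$ is an infinite set accumulating in the interior of $(0,\pi)$ (it consists of points $\pm\theta+J\pi+K\alpha$ and is dense), so infinitely many terms $\sin\theta'$ are bounded away from $0$ and $\sum_{\theta'\in\mathcal{C}(\theta)}\sin\theta'=\infty$. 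Compactness of $[0,\pi]$ and the isometric character of the $T_i$ are irrelevant to the countable chain and do not "rule out escape": for an irreducible recurrent chain the stationary measure is unique up to scale, so a non-normalizable stationary measure actually rules \emph{out} positive recurrence rather than certifying it. You were right to flag this as the main obstacle, but the fix you sketch does not work. For what it is worth, the paper's own treatment of this point is no better — it simply asserts "the Markov chain admits a stationary distribution" without proof — so the step you identified is genuinely the weak link of the statement, and aperiodicity plus irreducibility alone will not close it.
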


\begin{proof}
Initially, suppose that $\frac{\alpha}{\pi}$ is a rational number. Observe that Proposition \ref{inv} implies that $\mathcal{C}(\theta)$ is finite. Hence, by Theorem \ref{markov}, we need to verify that the stochastic matrix $P$ in $\mathcal{C} (\theta)$ is irreducible. Notice that if $\theta', \theta'' \in \mathcal{C}(\theta)$, then there is an admissible sequence $(\theta_n)_n $ and also a $k \in \mathbb{N}$, such that $\theta_0 = \theta'$, $\theta''=T_{x_k}\circ \dots \circ T_{x_1}(\theta')$ and  $p_{\theta' \theta''}^k = p_{x_1}(\theta')p_{x_2}(T_{x_1}(\theta'))\dots p_{x_k}\big( T_{x_{k-1}}\circ T_{x_{k-2}}\circ \dots \circ T_{x_1}(\theta')\big) > 0.$ Therefore, any state $\theta' \in \mathcal{C}(\theta)$ has a positive probability of reaching any other state $\theta'' \in \mathcal{C}(\theta)$. So, the stochastic matrix is irreducible, and we conclude that the Markov shift in $\Sigma$ is ergodic.

From now on, suppose that $\frac{\alpha}{\pi}$ is an irrational number and $\theta \in (0,\pi)$. By Proposition \ref{infinito}, the set $\mathcal{C}(\theta)$ is countably infinite. Similarly to the first case, for any $\theta'$ and $\theta''$ in $\mathcal{C}(\theta),$ there is a $k$, such that $p_{\theta' \theta''}^k > 0$. By Theorem \ref{markov}, the Markov chain admits a stationary distribution. Therefore, the Markov shift in $\Sigma$ is ergodic.  
\end{proof}

\section{The Random Billiard Map in the Circle}
Consider the deterministic billiard map $F(s,\theta)=(s+2\theta\mod 2\pi,\theta)$ in the circle. Thus, the random billiard map in the circle is $\overline{F}(s,\theta)= F \circ \overline{T_i}(s,\theta)=(s+2T_i(\theta) \mod 2\pi, T_i(\theta))$ with probability $p_i(\theta).$

\subsection{The Strong Knudsen's Law For the Random Billiard Map in the Circle}
Given a random billiard map $\overline{F}(s,\theta) = F(s,T_i(\theta))$ with probability $p_i(\theta)$, we saw in Section \ref{secmedida} that the random billiard map preserves the measure $\lambda \times \mu$, that is, $(\lambda \times \mu)(A\times B) = \sum_{i=1}^N\iint \mathds{1}_{A\times B}(\overline{F}_i(s,\theta))p_i(\theta)d\lambda (s) d\mu (\theta)$. We define the transition probability kernel by
$$K((s,\theta), A\times B) = \sum_{i=1}^4 p_i(\theta)\mathds{1}_{A\times B}(\overline{F}_i(s,\theta)).$$  
This transition probability kernel defines the evolution of an initial distribution $\nu$ in $([0,\pi] \times [0,L], \mathcal{B})$ under $\overline{F}$ iteratively as $\nu^{(0)}=\nu$ and
$$\nu^{(n+1)}(A\times B) = \iint K((s,\theta), A\times B) d\nu^{n}(s,\theta)$$
for $n \geq 1$.

\begin{proposition}\label{leiforte}
Let $\frac{\alpha}{\pi}$ be an irrational number and let $\overline{F}$ be the random billiard map in the circle. If $\nu(s,\theta) := \nu_1(s) \times \nu_2(\theta) = \lambda(s) \times g(\theta)\mu(\theta),$ then
$$\nu^{(n)}(A \times B) = \nu_1^{(n)}\times\nu_2^{(n)} (A \times B) \longrightarrow (\lambda \times \mu)(A\times B).$$
\end{proposition}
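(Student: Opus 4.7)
The plan is to exploit the product structure of the initial distribution together with the translation-invariance of Lebesgue measure on the circle, reducing the statement to the one-dimensional Strong Knudsen's Law already established in Theorem~\ref{prop4.4}. The key observation is that while the random index $i$ is chosen with a $\theta$-dependent probability, the action on the $s$-coordinate under $\overline{F}_i$ is merely the translation $s \mapsto s + 2T_i(\theta) \bmod 2\pi$, which preserves the normalized Lebesgue measure $\lambda$ on $[0,2\pi)$ for every value of the shift. One should therefore expect $\lambda$ to persist as the first marginal at every step and the two coordinates to remain decoupled.

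The main step is to prove by induction on $n$ that $\nu^{(n)} = \lambda \times \nu_2^{(n)}$. Assuming this for $n$, one expands
\[
\nu^{(n+1)}(A \times B) = \sum_{i=1}^4 \iint p_i(\theta)\,\mathds{1}_A\!\left(s + 2T_i(\theta)\bmod 2\pi\right)\mathds{1}_B(T_i(\theta))\, d\lambda(s)\, d\nu_2^{(n)}(\theta),
\]
and integrates in $s$ first via Fubini. Since the factors $p_i(\theta)$ and $\mathds{1}_B(T_i(\theta))$ do not depend on $s$, translation-invariance of $\lambda$ gives $\int \mathds{1}_A(s + 2T_i(\theta) \bmod 2\pi)\,d\lambda(s) = \lambda(A)$, independently of both $i$ and $\theta$. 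Pulling $\lambda(A)$ out of the sum and the integral produces $\nu^{(n+1)}(A\times B) = \lambda(A)\,\nu_2^{(n+1)}(B)$, closing the induction and simultaneously showing $\nu_1^{(n)} = \lambda$ for every $n \geq 0$.

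The recursion obtained in the induction identifies $\nu_2^{(n+1)}(B) = \sum_{i=1}^{4} \int p_i(\theta)\mathds{1}_B(T_i(\theta))\,d\nu_2^{(n)}(\theta)$, which is exactly the evolution of an initial distribution on $[0,\pi]$ under the Feres Random Map $T$. Since $\nu_2 = g\mu \ll \mu$ and $\alpha/\pi$ is irrational by hypothesis, Theorem~\ref{prop4.4} (equivalently Theorem~\ref{lamb final}) gives $\nu_2^{(n)}(B) \to \mu(B)$ for every $B \in \mathcal{B}([0,\pi])$. Combining with the factorization, for every rectangle $A \times B$,
\[
\nu^{(n)}(A \times B) = \lambda(A)\,\nu_2^{(n)}(B) \longrightarrow \lambda(A)\,\mu(B) = (\lambda\times\mu)(A\times B).
\]

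There is no genuine analytic obstacle here: the entire content is the recognition that the $s$-marginal is frozen at $\lambda$ and that this stationarity \emph{decouples} the two coordinates at every step, after which the result is a direct corollary of Theorem~\ref{prop4.4}. The only delicate point is the order of operations — the index $i$ is drawn using $p_i(\theta)$ before the shift $s \mapsto s + 2T_i(\theta)$ is performed — so that upon integrating in $s$ first the factors $p_i(\theta)$ and $\mathds{1}_B(T_i(\theta))$ can be legitimately pulled outside the $s$-integral. Once this bookkeeping is fixed, the translation-invariance argument is unconditional.
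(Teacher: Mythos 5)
Your proof is correct and follows essentially the same route as the paper: expand the kernel, use Fubini and the translation-invariance of $\lambda$ to show $\nu^{(n)}(A\times B)=\lambda(A)\,\nu_2^{(n)}(B)$ by induction, and then invoke Theorem~\ref{prop4.4} for the $\theta$-marginal. You actually spell out the inductive step more explicitly than the paper, which only computes $\nu^{(1)}$ and says ``inductively.''
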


\begin{proof}
Observe that $\nu_2(\theta) = g(\theta)\mu(\theta)$ is absolutely continuous with respect to $\mu$. In addition, we have that
\begin{align*}
\nu^{(1)}(A \times B) & = \iint K((s,\theta), A\times B) d\nu (s,\theta) 
\\ &  
= \sum_{i=1}^4 \int \int \mathds{1}_{A \times B}(\overline{F}_i(s,\theta)) p_i(\theta)d(\lambda(s) g(\theta)\mu(\theta))
\\ & =
\sum_{i=1}^4 \int p_i(\theta)\mathds{1}_{B}(T_i(\theta)) \Big(\int \mathds{1}_{A}(s+2T_i(\theta) \mod 2\pi) d\lambda(s)\Big) d(g(\theta)\mu(\theta))
\\ & =
\lambda(A) \sum_{i=1}^4 \int \mathds{1}_B(T_i(\theta))p_i(\theta) d(g(\theta)\mu(\theta)) 
\\ & =
\lambda(A) \times \nu_2^{(1)}(B). 
\end{align*}
Inductively, we obtain that $\nu^{(n)}(A \times B) = \lambda(A) \times \nu_2^{(n)}(B)$. By Theorem \ref{prop4.4}, we have that $\nu_2^{(n)}(B) \rightarrow \mu(B)$ and therefore $\nu^{(n)}(A\times B) \rightarrow (\lambda \times \mu)(A \times B)$. In other words, we proved the Knudsen's Strong Law for $\overline{F}$ for a particular family of measures $\nu$.
\end{proof}

The geometric meaning of Proposition \ref{leiforte} is as follows: given an irrational number $\frac{\alpha}{\pi}$, the distribution of points and angles after an arbitrarily large number of collisions in the circle is close to the uniform distribution $\lambda \times \mu$.

\subsection{Dense Orbits}
In this section, we show the abundance of trajectories that are dense at the boundary of the circular table as well as in the ring between the boundary of the billiard table and random caustics, regardless whether $\frac{\alpha}{\pi}$ is rational or irrational.

Given $(s,\theta) \in [0,2\pi)\times (0,\pi)$ and $\underline{x} \in \Sigma_\theta$, we say that the orbit of $(s,\theta)$ by $\overline{F}$ is dense in the circle if the sequence $(s+2\sum_{k=1}^nT_{x_k} \circ T_{x_{k-1}}\circ \dots \circ T_{x_1}(\theta) \mod 2\pi)_n$ is dense in $[0,2\pi)$, that is, if the projection in the first coordinate of the iterates of $\overline{F}_{\underline{x}}(s,\theta)$ is dense in $[0,2\pi)$.

\begin{proposition}\label{densa}
Let $\alpha=\beta \pi$ with $\beta \in (0,\frac{1}{6}).$ Then, for all $\theta \in (0,\pi)$ such that $\frac{\theta+\alpha}{\pi}$ is an irrational number, there is a dense orbit for the random billiard map in the circle.  
\end{proposition}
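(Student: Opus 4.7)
The plan is, for each $(s, \theta)$ with $(\theta+\alpha)/\pi$ irrational, to exhibit an admissible sequence $\underline{x} \in \Sigma_\theta$ under which the projection of the orbit to the first coordinate is dense in $[0,2\pi)$. The key observation is that $T_1$ and $T_3$ are mutually inverse, so the two-step block ``apply $T_1$, then $T_3$'' returns the angle coordinate to its starting value $\theta'$ while shifting the position by $2T_1(\theta') + 2T_3(T_1(\theta')) = 4(\theta' + \alpha) \pmod{2\pi}$. Iterating such blocks generates the orbit of a circle rotation of $s$ by $4(\theta'+\alpha)$, which is dense in $[0,2\pi)$ whenever $(\theta' + \alpha)/\pi$ is irrational. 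Inspection of the formulas for $p_1$ and $p_3$ shows that this block is admissible at $\theta'$ exactly when $\theta' \in (0, \pi - 2\alpha)$, since then $p_1(\theta')>0$ and $p_3(T_1(\theta')) = p_3(\theta'+2\alpha) > 0$.

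If $\theta \in (0, \pi-2\alpha)$, I would apply $T_1 T_3$ blocks directly from the start, giving a dense orbit. If instead $\theta \in [\pi-2\alpha, \pi)$, I would prepend one application of $T_3$, admissible because $\theta > 2\alpha$ under $\alpha < \pi/4$, sending the angle to $\theta - 2\alpha \in [\pi-4\alpha, \pi-2\alpha) \subset (0, \pi-2\alpha)$. Iterating $T_1 T_3$ blocks from this new state produces rotation by $4(\theta - \alpha)$, which suffices as long as $(\theta-\alpha)/\pi$ is irrational. The delicate subcase is when $(\theta-\alpha)/\pi \in \mathbb{Q}$. In that event $(\theta+\alpha)/\pi = (\theta-\alpha)/\pi + 2\beta$ being irrational forces $\beta \notin \mathbb{Q}$, so I would prepend a second $T_3$ (admissible because $\alpha < \pi/6$ gives $\theta - 2\alpha > 2\alpha$), reaching angle $\theta - 4\alpha \in [\pi-6\alpha, \pi-4\alpha) \subset (0, \pi-2\alpha)$. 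Running $T_1 T_3$ blocks from this angle produces rotation by $4(\theta - 3\alpha)$, and $(\theta-3\alpha)/\pi = (\theta-\alpha)/\pi - 2\beta$ is rational minus irrational, hence irrational.

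The main obstacle is precisely this last edge case: the naive single-$T_3$ correction can genuinely fail when $\beta$ is irrational, and one must recognise that its failure forces $\beta \notin \mathbb{Q}$ and then exploit that fact to design a second $T_3$ correction that restores irrationality of the rotation step. The remaining bookkeeping amounts to checking that each $T_i$ is applied at an angle where the corresponding $p_i$ is strictly positive, which reduces to routine inequalities built from $\alpha < \pi/6$.
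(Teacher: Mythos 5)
Your proposal is correct, and its core mechanism is exactly the paper's: the block ``$T_1$ then $T_3$'' (admissible on $(0,\pi-2\alpha)$) fixes the angle and advances $s$ by $4(\theta'+\alpha)$, turning the even-time orbit into a circle rotation that is dense when $(\theta'+\alpha)/\pi$ is irrational. Where you diverge is in how the remaining angles $\theta\in[\pi-2\alpha,\pi)$ are handled, and your treatment is actually the more careful one. The paper covers that range with the sequence $(3131\dots)$, which produces a rotation by $4(\theta-\alpha)$; but the hypothesis only guarantees that $(\theta+\alpha)/\pi$ is irrational, and since $(\theta-\alpha)/\pi=(\theta+\alpha)/\pi-2\beta$, the quantity $(\theta-\alpha)/\pi$ can be rational when $\beta$ is irrational (in which case the $(3131\dots)$ orbit projects to a finite set and the paper's argument, as written, does not close). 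You identify precisely this edge case and repair it: when $(\theta-\alpha)/\pi\in\mathbb{Q}$ you deduce that $\beta$ must be irrational, prepend a second $T_3$ (admissible because $\theta\ge\pi-2\alpha>4\alpha$ under $\alpha<\pi/6$), land in $(0,\pi-2\alpha)$, and obtain a rotation by $4(\theta-3\alpha)$ whose multiple of $\pi$ is rational minus irrational, hence irrational. Your admissibility checks ($p_1>0$ on $(0,\pi-2\alpha)$, $p_3>0$ for angles strictly above $2\alpha$) are consistent with the explicit formulas for the $p_i$. In short: same key idea as the paper, plus a two-step correction that closes a genuine gap in the published argument.
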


\begin{proof}
For every $\theta \in (0,\pi-2\alpha)$, notice that $p_1(\theta)p_3(T_1(\theta))>0$; therefore, the sequence $\underline{x} = (131313\dots)$ is in $\Sigma_\theta$, and also generates a dense orbit in the circle. Indeed, observe that $\overline{F}_{\underline{x}}^{(2n)}(s,\theta)=(s+4n(\theta+\alpha) \mod 2\pi,\theta)$, that is, at even times, the random billiard behaves like a deterministic billiard of initial angle $4(\theta + \alpha)$. Thus, if $\frac{\theta+\alpha}{\pi}$ is an irrational number, then the orbit is dense in the circle. Similarly, if $\theta \in (2\alpha,\pi)$, we have that $p_3(\theta)p_1(T_3(\theta)) >0$ and, therefore, the sequence $\underline{x}=(313131\dots) \in \Sigma_\theta$ generates a dense orbit in the circle.
\end{proof}



In order to enunciate the next few results, notice that we can relate each sequence $\underline{y} \in \big(\mathcal{C}(\theta) \big)^\mathbb{N}$ to a unique sequence $\underline{x} \in \cup_{\theta' \in \mathcal{C}(\theta)} \Sigma_{\theta'}$ and vice versa. Thus, for each $\underline{y} \in \big(\mathcal{C}(\theta) \big)^\mathbb{N}$, consider the related trajectory given by the sequence $\underline{x} \in \cup_{\theta' \in \mathcal{C}(\theta)} \Sigma_{\theta'}$, that is, consider $\overline{F}_{\underline{x}}.$

\begin{theorem}\label{orbitadensa}
Let $\overline{F}:[0,2\pi)\times (0,\pi) \to [0,2\pi)\times (0,\pi)$ be the random billiard map in the circle. Given $(s,\theta) \in [0,2\pi) \times (0,\pi)$, let $\nu$ be the Markov measure in $\big( \mathcal{C}(\theta) \big)^\mathbb{N}.$ 
\begin{enumerate}
\item[1.] If $\frac{\alpha}{\pi}$ is a rational number and $\frac{\theta}{\pi}$ is an irrational number, then for $\nu$-almost every $\underline{y}\in \big( \mathcal{C}(\theta) \big)^\mathbb{N}$ the related trajectory $\overline{F}_{\underline{x}}(s,\theta)$ is dense in the circle.
\item[2.] If $\frac{\alpha}{\pi}$ is a irrational number and the Markov chain is aperiodic in $\mathcal{C}(\theta)$, then for $\nu$-almost every $\underline{y}\in \big( \mathcal{C}(\theta) \big)^\mathbb{N}$ the related trajectory $\overline{F}_{\underline{x}}(s,\theta)$ is dense in the circle.
\end{enumerate}
\end{theorem}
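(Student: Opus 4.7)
The plan is to lift the analysis to a skew product over the Markov shift and invoke ergodicity. Given $(s,\theta)$, identify each $\underline{y} \in (\mathcal{C}(\theta))^{\mathbb{N}}$ with its associated index sequence $\underline{x}$ so that $\overline{F}_{\underline{x}}^{n}(s,\theta) = (s_n, y_n)$ with $s_n = s + 2\sum_{k=1}^{n} y_k \bmod 2\pi$. Density of the trajectory in the circle is equivalent to density of $(s_n)_n$ in $[0,2\pi)$. To study this, I introduce the skew product
$$\widehat{F}\colon [0,2\pi) \times (\mathcal{C}(\theta))^{\mathbb{N}} \longrightarrow [0,2\pi) \times (\mathcal{C}(\theta))^{\mathbb{N}}, \qquad \widehat{F}(s,\underline{y}) = \bigl(s + 2y_1 \bmod 2\pi,\; \sigma \underline{y}\bigr),$$
which preserves $\lambda \times \nu$, where $\lambda$ is normalized Lebesgue measure on $[0,2\pi)$. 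By Proposition \ref{markovrandom}, the base system $(\sigma,\nu)$ is ergodic under each of hypotheses $(1)$ and $(2)$, and the plan is to upgrade this to ergodicity of $\widehat{F}$.

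The key step is the upgrade. Suppose $f \in L^2(\lambda \times \nu)$ satisfies $f \circ \widehat{F} = f$, and expand in the $s$-variable $f(s,\underline{y}) = \sum_{n \in \mathbb{Z}} c_n(\underline{y})\, e^{ins}$. Invariance yields
$$c_n(\sigma \underline{y}) \, e^{2iny_1} = c_n(\underline{y}), \qquad n \in \mathbb{Z}.$$
For $n=0$, ergodicity of $\sigma$ forces $c_0$ to be a.e.\ constant. For $n\neq 0$, $|c_n|$ is $\sigma$-invariant, hence equal to some constant $C_n\geq 0$; if $C_n>0$ then $\phi_n := c_n/C_n$ is a measurable $S^1$-valued solution of the cohomological equation $\phi_n(\sigma \underline{y}) = \phi_n(\underline{y})\, e^{-2iny_1}$. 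The main work is to rule out such $\phi_n$ under each hypothesis. In Case $(1)$, $\mathcal{C}(\theta)$ is finite by Proposition \ref{inv}, and the cocycle values $2ny_1 \bmod 2\pi$ reduce to $\pm 2n\theta$ shifted by rational multiples of $2\pi$; the irrationality of $\theta/\pi$ makes these incommensurable, and one rules out $\phi_n$ by evaluating the cocycle along a loop of the finite irreducible chain, forcing $\phi_n$ to be multi-valued. In Case $(2)$, $\mathcal{C}(\theta)$ is countably infinite (Proposition \ref{infinito}) and one combines the irrationality of $\alpha/\pi$ with the aperiodicity hypothesis to reach the same contradiction along returns to a fixed state. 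Hence $C_n=0$ for every $n\neq 0$, $f$ is $(\lambda\times\nu)$-a.e.\ constant, and $\widehat{F}$ is ergodic.

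Once ergodicity is in hand, Birkhoff's theorem yields that $(\lambda \times \nu)$-a.e.\ orbit of $\widehat{F}$ is equidistributed in $[0,2\pi) \times (\mathcal{C}(\theta))^{\mathbb{N}}$, so the projection $(s_n)_n$ is dense in $[0,2\pi)$. By Fubini, there exists $G \subset [0,2\pi)$ of full Lebesgue measure such that for every $s \in G$, $\nu$-almost every $\underline{y}$ yields a dense trajectory. To remove the a.e.\ restriction on $s$, observe that for any $s'\in[0,2\pi)$ and any $s\in G$ the orbit from $(s',\theta)$ is the translation by $s'-s \bmod 2\pi$ of the orbit from $(s,\theta)$; since translates of dense subsets of $[0,2\pi)$ remain dense, the conclusion extends to every initial $s$. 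The main obstacle throughout is the cohomological step: aperiodicity (Case $2$) and the irrationality of $\theta/\pi$ (Case $1$) are exactly what is needed to exclude nontrivial $L^2$ eigenfunctions of $\widehat{F}$, and the countable state space in Case $2$ requires a careful return-time argument rather than a direct finite-group computation.
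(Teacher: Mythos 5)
Your strategy is genuinely different from the paper's and, if completed, would prove something stronger (equidistribution of $(s_n)_n$, not mere density). The reductions you perform are sound: the skew product $\widehat{F}$ does preserve $\lambda\times\nu$; the Fourier expansion correctly reduces ergodicity of $\widehat{F}$ to the non-existence of measurable unimodular solutions of $\phi_n(\sigma\underline{y})=\phi_n(\underline{y})e^{-2iny_1}$ for $n\neq 0$; and the closing translation argument is clean, since changing $s$ only translates the whole sequence $(s_n)_n$, so the set of good $\underline{y}$ is independent of $s$. The arithmetic you point to is also the right one: the $2$-loop $\theta'\leftrightarrow\theta'+2\alpha$ of the chain carries the obstruction $e^{-2in(2\theta'+2\alpha)}$, which is nontrivial for every $n\neq0$ when $\theta/\pi$ is irrational and $\alpha/\pi$ rational, and a $2$- or $4$-loop serves in the irrational-$\alpha$ case.

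The gap is in converting that periodic-orbit obstruction into the non-existence of a \emph{measurable} $\phi_n$. ``Evaluating the cocycle along a loop'' is not an argument for an $L^2$ solution: periodic points have $\nu$-measure zero, so $\phi_n$ need not be defined on them, and in general measurable transfer functions do not see periodic data. In Case 1 this is repairable --- the cocycle $e^{-2iny_1}$ is locally constant and the state space is finite, so a measurable Liv\v{s}ic/cocycle-rigidity theorem for Markov measures on irreducible subshifts of finite type upgrades $\phi_n$ to a continuous solution, after which the loop computation yields the contradiction --- but that rigidity statement is the entire content of the step and must be cited or proved. In Case 2 the state space is countably infinite (Proposition \ref{infinito}) and no off-the-shelf measurable Liv\v{s}ic theorem applies; your ``careful return-time argument'' is exactly where the proof lives and it is not carried out (inducing on a single state produces a countable-alphabet full shift with an unbounded cocycle whose recurrence/integrability must be controlled). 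As written, the ergodicity of $\widehat{F}$ --- the pivot of your whole proof --- is asserted rather than established. Note that the paper sidesteps all of this: it only needs density, which it extracts from ergodicity of the base Markov shift (Proposition \ref{markovrandom}) together with the observation that the finite word $(1313\dots13)$ forces the position into any prescribed arc (Proposition \ref{densa}: at even times the block acts as a rotation by $4(\theta'+\alpha)$, with the block length uniform in $s$ by minimality and compactness) and occurs in $\nu$-almost every symbol sequence. Either supply the cocycle rigidity your route requires, or fall back on that softer forcing-word argument.
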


\begin{proof}
By Proposition \ref{markovrandom}, we have the ergodicity of the Markov shift in both items.  

For the first part, fix the open interval $(a,b) \subset [0,2\pi]$, let $\frac{\theta}{\pi}$ be an irrational number, and consider $\theta' \in \mathcal{C}(\theta)$ where $T_3 \circ T_1(\theta')$ satisfies $p_3(T_1(\theta'))p_1(\theta')>0$. By Proposition \ref{densa}, there is an $N \in \mathbb{N}$ such that the finite sequence $\underline{z}=(1313 \dots 13)\in \Sigma_{\theta'}$ with $N$ elements satisfies $\overline{F}^N_{\underline{z}}(s,\theta')\cap \big((a,b)\times(0,\pi)\big)\neq\emptyset$ for all $s \in [0,2\pi]$, since the sequence $\underline{x}$ generates a dense trajectory at the boundary of the circular table.

By the ergodicity of the Markov shift (see Proposition \ref{markovrandom}), there is an $X_{\theta'} \subset \Sigma_{\theta'}$ of total measure such that the finite sequence $\underline{z}=(1313\dots 13)$ appears in all $\underline{x} \in X_{\theta'}$. Therefore, for every $\underline{x}\in X_{\theta'}$, there is an $m \geq N$ such that $\overline{F}^{(m)}_{\underline{x}}(s,\theta')$ intersects $(a,b) \times (0,\pi)$, that is, the sequence $(s+2\sum_{k=1}^nT_{x_k} \circ T_{x_{k-1}}\circ \dots \circ T_{x_1}(\theta') \mod 2\pi)_n$ intersects $(a,b)$.

Finally, notice that the finite sequence $\underline{z}=(1313\dots 13)$ appears in almost every sequence $\underline{x} \in \Sigma_{\theta}$. Therefore, $(s+2\sum_{k=1}^nT_{x_k} \circ T_{x_{k-1}}\circ \dots \circ T_{x_1}(\theta) \mod 2\pi)_n$ intersects the interval $(a,b)$.

The proof of the second part is very similar to the first one.
\end{proof}

\subsection{Caustics of the Random Billiard Map in the Circle}

In deterministic billiards, a caustic is a curve having the property that if a billiard trajectory is tangent to it at a given point, then it is tangent to it after each collision with the boundary. In the case of a circular deterministic billiard, the caustics are circles of the same center, as we saw in the Section 2.

Remember that given an initial angle $\theta \in (0,\pi)$, for each $\theta' \in \mathcal{C}(\theta)$, we have a circle of the same center, whose radius is $\cos T_i(\theta')$,  that is tangent to the segment of the trajectory that connects the points $(s', \theta')$ and $(s'+2T_i(\theta') \mod 2\pi, T_i(\theta'))$. Remember that $\overline{F}(s',\theta') = (s'+2T_i(\theta') \mod 2\pi, T_i(\theta'))$ with probability $p_i(\theta')$. Therefore, depending on the value of $\alpha$, we can have finite or countably infinite circles which are tangent to some segment of the trajectory of the random billiard map in the circle.  Remember that, given $\theta \in (0,\pi)$, all the possibilities of exit angles of the circular random billiard are found in the set $\mathcal{C}(\theta)$ defined in \ref{def1}. We also remember that given a sequence $\underline{x} \in \Sigma_\theta,$ we can relate it to an admissible sequence $(\theta_n)_n$ defined in \ref{admissivel}.

\begin{definition}
Let $(s,\theta) \in [0,2\pi]\times(0,\pi)$ and $\underline{x}\in \Sigma_{\theta}.$ We define random caustic with respect to trajectory $F_{\underline{x}}(s,\theta)$ in the circle as the circle of radius $r=\inf_{\theta'\in (\theta_n)_n}\cos(\theta')$, where $(\theta_n)_n$ is an admissible sequence with respect to sequence $\underline{x}.$
\end{definition}

Note that random billiards is defined through deterministic billiards. In particular, random circular billiards is defined through deterministic circular billiards. Therefore, that circles whose radius are of the form $\cos(\theta')$, with $\theta' \in \mathcal{C}(\theta)$, are tangent to some billiard trajectory segment.
Moreover, there is always random caustic because in the case where $\inf_{\theta'\in (\theta_n)_n}\cos(\theta')=0$, we will have a single point (circle center) as caustic. In this case we say that the caustic is degenerate, otherwise we say that the random caustic is non-degenerate.

\begin{figure}[!htb]
\centering
\includegraphics[scale=0.045]{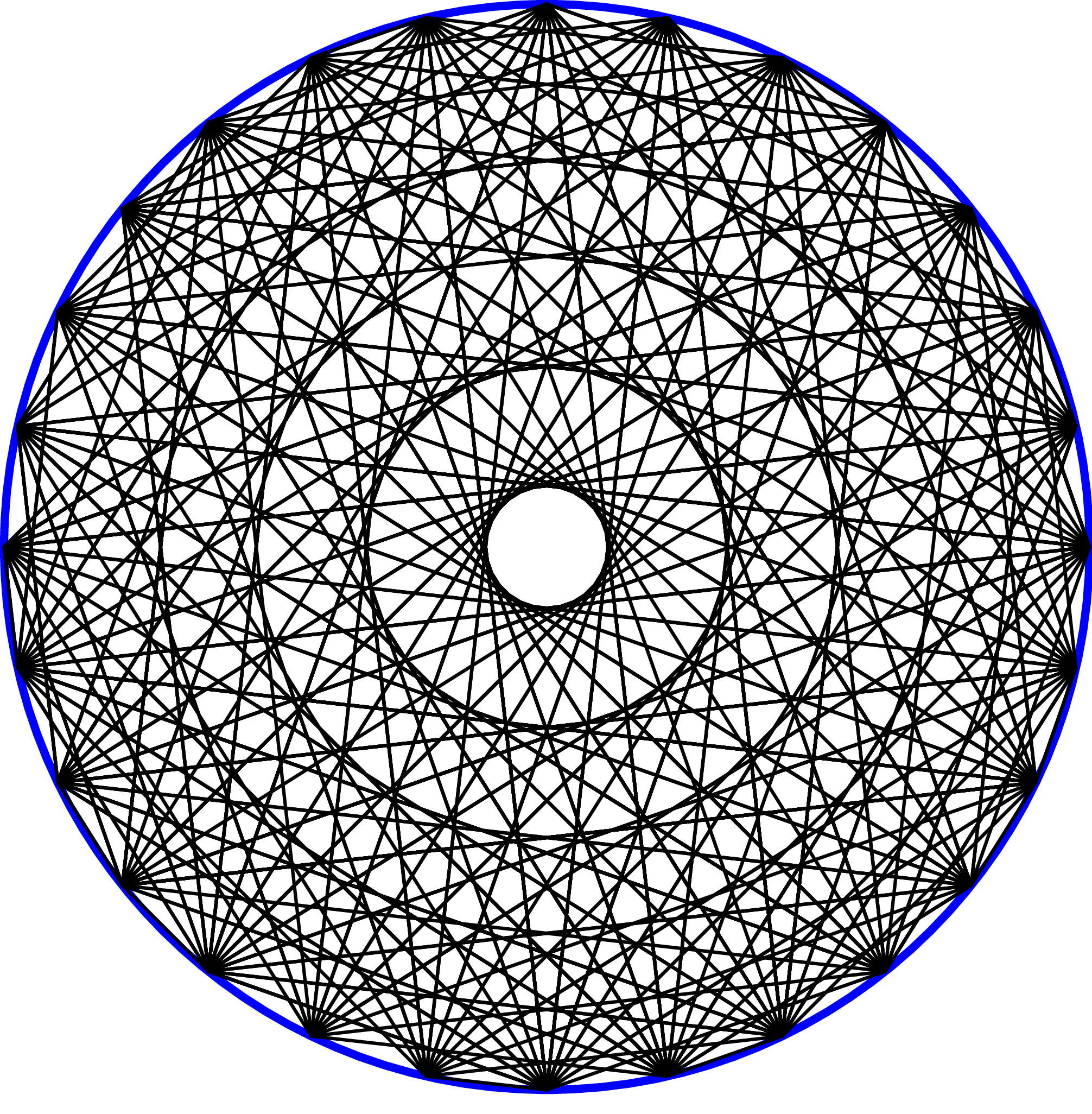} \;\;\;
\includegraphics[scale=0.045]{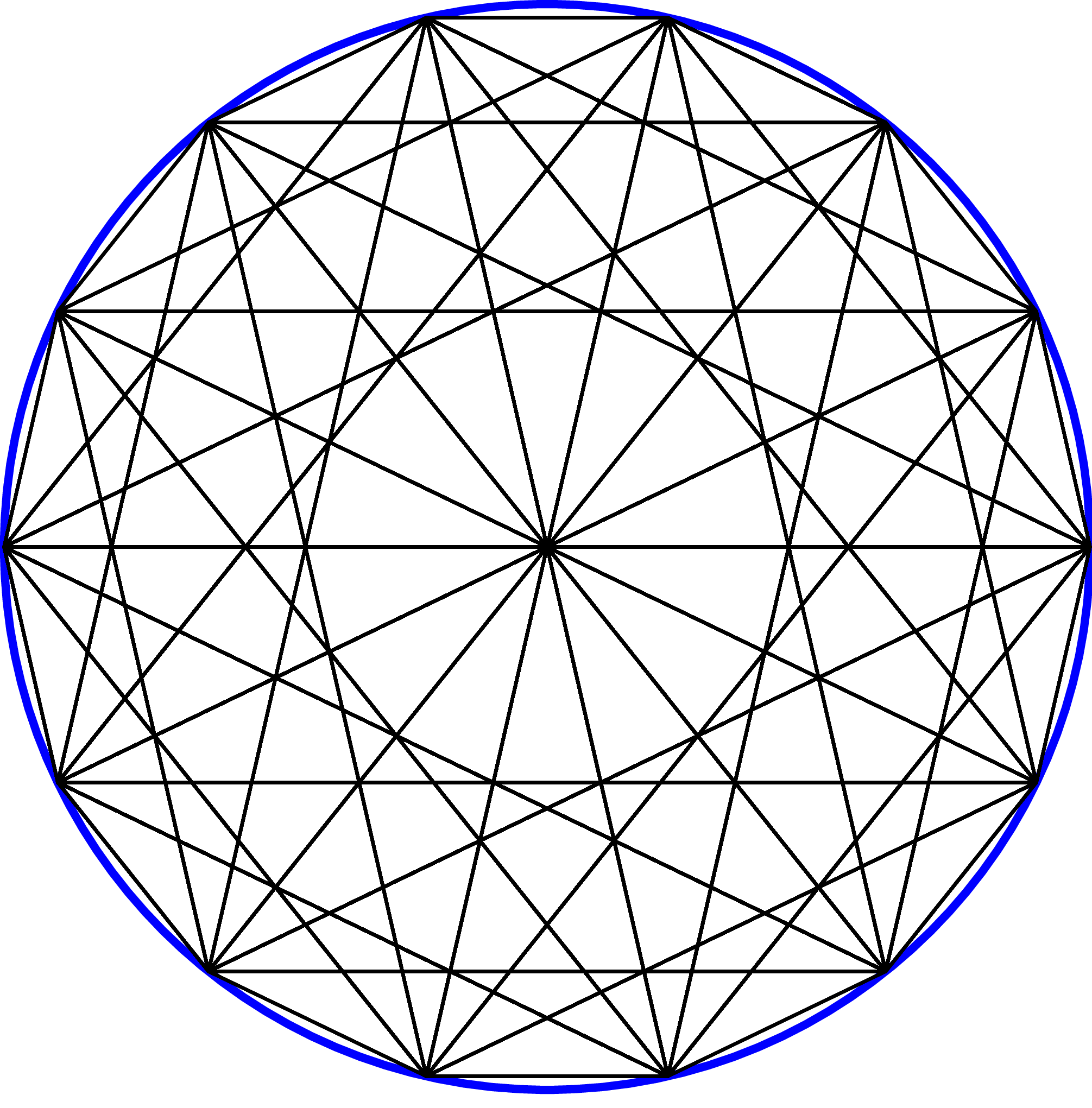}
\caption{In the left, we have the existence of random caustics for the random billiard map with $\alpha=\frac{\pi}{7}$ and initial angle $\theta=\frac{\pi}{20}$. In the right, we have the non-existence of a random caustic for the random billiard map with $\alpha=\frac{\pi}{7}$ and initial angle $\theta=\frac{\pi}{2}$.}\label{fig3}
\end{figure}

Observe that in Figure \ref{fig3} (left) we have $7$ circles, since $\# \mathcal{C}(\frac{\pi}{20})=7$. In addition, we have no trajectories that pass through the center of the circular table and therefore the circle of smallest radius is the caustic of the random billiard map. But for initial angle $\theta = \frac{\pi}{2}$, see Figure \ref {fig3} (right), we have orbits passing through the center of the circular table and, in this case, the center of the circle will be the degenerated caustic. For $\alpha=\frac{\pi}{7}$, if $\theta=\frac{\pi}{2}, \theta=\frac{\pi}{14}, \theta=\frac{3\pi}{14}$ and $\theta=\frac{5\pi}{14}$, we will have degenerated caustic. In the other cases of initial angles $\theta$, we will always have non-degenerated caustics.


In the case that $\frac{\alpha}{\pi}$ is an irrational number, we have that $\mathcal{C} (\theta)$ is countably infinite. So if $\frac{\pi}{2} \not\in \mathcal{C}(\theta)$ and it is not an accumulation point, then there is a neighborhood of $\frac{\pi}{2}$  in $(0,\pi)$ so that no trajectory has no exit angle in that one belonging to that neighborhood. So there is circle $C$ centered on the origin where the trajectory never intersects in the circle $C.$ Then the random billiard map will have a caustic, that is, there will be a circle of smaller radius that is tangent to some part of the path. Otherwise, we will have degenerated caustics.

Let $U$ be the disc of radius 1, let $\partial U$ be the boundary of the disc $U$ and let $\mathcal{A}$ be the circular ring formed by the boundary of the disc $\partial U$ and a random caustic $\gamma_0$.

\begin{theorem}
Consider $(s,\theta) \in [0,2\pi] \times (0,\pi)$ such that $\frac{\pi}{2} \not\in \mathcal{C}(\theta)$ and it is not an accumulation point, and let $\nu$ be the Markov measure.
\begin{enumerate}
\item[1.] If $\frac{\alpha}{\pi}$ is a rational number and $\frac{\theta}{\pi}$ is an irrational number, then, for $\nu$-almost every $\underline{y}\in \big( \mathcal{C}(\theta) \big)^{\mathbb{N}}$, the related trajectory $\overline{F}_{\underline{x}}(s,\theta)$ is dense in $\mathcal{A}$.
\item[2.] If $\frac{\alpha}{\pi}$ is an irrational number and $\theta$ is such that the Markov chain is aperiodic, then, for $\nu$-almost every $\underline{y}\in \big( \mathcal{C}(\theta) \big)^{\mathbb{N}}$, the related trajectory $\overline{F}_{\underline{x}}(s,\theta)$ is dense in $\mathcal{A}$. In particular, if $\frac{\pi}{2}$ is an accumulation point of $\mathcal{C}(\theta)$, then, for $\nu$-almost every $\underline{y}\in \big( \mathcal{C}(\theta) \big)^{\mathbb{N}}$, the related trajectory $\overline{F}_{\underline{x}}(s,\theta)$ is dense in $U$. 
\end{enumerate}
\end{theorem}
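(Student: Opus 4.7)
The strategy is to adapt the gadget argument of Theorem \ref{orbitadensa} and target open subsets of $\mathcal{A}$ (or $U$ in the accumulation subcase) rather than of the boundary. Since $\mathcal{A}$ is second countable, fix a countable base $\{V_\ell\}_\ell$ of nonempty open subsets of $\mathcal{A}$; intersecting over $\ell$, the theorem reduces to showing that for each $V=V_\ell$ the set
\[
\{\underline{y}\in\mathcal{C}(\theta)^{\mathbb{N}}\,:\,\overline{F}_{\underline{x}}(s,\theta)\text{ meets }V\}
\]
has full $\nu$-measure.

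\textbf{Geometric target.} A chord of $\partial U$ with outgoing angle $\theta'$ is tangent to the concentric circle of radius $|\cos\theta'|$, so as the starting collision point varies over $[0,2\pi)$ these chords cover the annulus $\{|\cos\theta'|\leq |z|\leq 1\}$. The hypothesis that $\pi/2$ is neither in nor an accumulation point of $\mathcal{C}(\theta)$ makes the caustic radius $r=\inf_{\theta'\in\mathcal{C}(\theta)}|\cos\theta'|$ strictly positive, and in case~1 it is realized by some angle in the finite set $\mathcal{C}(\theta)$ (Proposition \ref{inv}); in the ``in particular'' subcase $r=0$ and $\mathcal{A}=U$. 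Since $V$ is open and $V\subset\{r\leq|z|\leq 1\}$, I may pick $\theta^*\in\mathcal{C}(\theta)$ with $|\cos\theta^*|<\sup_{p\in V}|p|$, so that
\[
I^*:=\{s\in[0,2\pi)\,:\,\text{the chord from }(s,\theta^*)\text{ meets }V\}
\]
is a nonempty open subset of $[0,2\pi)$.

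\textbf{Gadget.} Because $\alpha<\pi/6$, a direct check on the domains of $p_1,p_3$ shows that every $\theta^*\in(0,\pi)$ admits at least one of the two compositions $T_3\circ T_1$ or $T_1\circ T_3$ with positive probability; say the pattern $13$ is admissible at $\theta^*$. Exactly as in the proof of Proposition \ref{densa}, $\overline{F}_{(1313\cdots)}^{(2m)}(s_0,\theta^*)=\bigl(s_0+4m(\theta^*+\alpha)\bmod 2\pi,\ \theta^*\bigr)$. The case hypotheses force $(\theta^*+\alpha)/\pi\notin\mathbb{Q}$: in case~1 because $\alpha/\pi\in\mathbb{Q}$, $\theta/\pi\notin\mathbb{Q}$, and every $\theta^*\in\mathcal{C}(\theta)$ has the form $\pm\theta+2k\alpha+m_0\pi$; in case~2 because the aperiodicity hypothesis excludes the countably many coincidences $\pm\theta/\pi+(2k+1)(\alpha/\pi)\in\mathbb{Q}$, and if a candidate $\theta^*$ were to fall into such a coincidence, it can be replaced by another $\theta^*\in\mathcal{C}(\theta)$ still satisfying $|\cos\theta^*|<\sup_{p\in V}|p|$. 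Then $\{4m(\theta^*+\alpha)\bmod 2\pi\}_{m=0}^{N}$ is $\varepsilon$-dense in $[0,2\pi)$ for suitable $N$; fixing $\varepsilon$ smaller than the length of some sub-arc of $I^*$, the finite word $\underline{z}=(1,3,1,3,\dots,1,3)$ of length $2N$ has the property that, starting from any $(s_0,\theta^*)$, at least one intermediate state is $(s,\theta^*)$ with $s\in I^*$; at that instant the associated chord meets $V$.

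\textbf{Ergodic conclusion and main obstacle.} Proposition \ref{markovrandom} gives ergodicity of the Markov shift on $\bigcup_{\theta'\in\mathcal{C}(\theta)}\Sigma_{\theta'}$ in both cases (the aperiodicity hypothesis of case~2 is used precisely here). The cylinder ``current state $\theta^*$, next $2N$ symbols equal $\underline{z}$'' has strictly positive $\nu$-measure, so by Birkhoff's theorem it occurs for $\nu$-a.e.\ $\underline{y}$, forcing the trajectory to enter $V$; a countable intersection over $\ell$ yields density in $\mathcal{A}$ (respectively $U$). The principal obstacle I anticipate is the compatibility of two constraints in case~2: the chosen $\theta^*$ must have $|\cos\theta^*|<\sup_{p\in V}|p|$ and simultaneously avoid the countable set of ``bad'' values for which $(\theta^*+\alpha)/\pi\in\mathbb{Q}$; showing that this intersection is nonempty for \emph{every} $V$ is the delicate arithmetic part that must be extracted from the aperiodicity assumption.
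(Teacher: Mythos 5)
Your proposal is correct in outline and follows essentially the same route as the paper: both combine (i) the geometric fact that a chord with outgoing angle $\theta'$ is tangent to the concentric circle of radius $|\cos\theta'|$ and that these chords sweep the annulus between that circle and $\partial U$ as the foot point varies, (ii) the $1313\cdots$ gadget of Proposition \ref{densa}, which at even times is a rotation by $4(\theta^*+\alpha)$, and (iii) ergodicity of the Markov shift (Proposition \ref{markovrandom}) to pass from one dense trajectory to $\nu$-almost all of them. The differences are in the packaging: the paper fixes a point $P\in\mathcal{A}$ and a ball $B_\epsilon(P)$, draws the tangent lines from $P$ to the caustic $\gamma_0$, and uses density of the gadget orbit at the caustic-generating angle to find a nearby chord through the ball, whereas you run over a countable base of open sets and choose, for each target $V$, any $\theta^*\in\mathcal{C}(\theta)$ with $|\cos\theta^*|<\sup_{p\in V}|p|$. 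Your variant is slightly more robust: it does not require the infimum defining the caustic radius to be attained (automatic only in case 1, where $\mathcal{C}(\theta)$ is finite by Proposition \ref{inv}), and it correctly restricts the gadget to the patterns $13$/$31$ --- the involutive patterns $22$ and $44$ rotate by $\mp 8\alpha$ at even times, which is a rational rotation in case 1 and would not give a dense boundary orbit, a point the paper's phrase ``Consider $T_i=T_j^{-1}$'' glosses over.

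The step you flag as unresolved is the only genuine issue, and the paper does not resolve it either (in case 2 it invokes Proposition \ref{densa} without checking that $(\theta_0+\alpha)/\pi$ is irrational). It can be closed as follows. Any $\theta^*\in\mathcal{C}(\theta)$ has the form $\pm\theta+J\pi+K\alpha$ with $J,K$ even, so $(\theta^*+\alpha)/\pi\in\mathbb{Q}$ forces $\pm\theta/\pi+(K+1)\alpha/\pi\in\mathbb{Q}$; for each choice of sign at most one even $K$ can satisfy this (two distinct bad values of $K$ would make $\alpha/\pi$ rational), and for a bad pair of sign and $K$ at most one even $J$ lands the angle in $(0,\pi)$. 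Hence at most two elements of $\mathcal{C}(\theta)$ are ``bad''. If the set $\{\theta'\in\mathcal{C}(\theta):|\cos\theta'|<\sup_{p\in V}|p|\}$ should consist only of bad angles, you can still conclude by concatenating words: run the gadget at a good angle $\theta^{**}$ to steer the foot point into the arc $I^*-c_w$, where $w$ is a fixed admissible word carrying $\theta^{**}$ to the needed bad angle and $c_w$ is its deterministic position increment, then append $w$. With that addendum your argument is complete.
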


\begin{proof}
We will use a similar argument to what was done in Theorem \ref{orbitadensa}, i.e, initially build a dense trajectory in circular ring $\mathcal{A}$ and then conclude, with an ergodic argument, that $\nu$-almost every trajectory is dense in the circular ring $\mathcal{A}.$ Note also that the fact that $\frac{\pi}{2} \not\in \mathcal{C}(\theta)$ and it is not an accumulation point, so every trajectory admits random caustic. That is, there is a circle with a smaller (positive) radius that is tangent to some trajectory segment.

For the first part, take $\theta \in (0,\pi)$ such that $\frac{\theta}{\pi}$ is an irrational number. By hypothesis, assume the existence of a $\theta_0 \in \mathcal{C}(\theta)$ such that $T_i(\theta_0)$ generates a random caustic $\gamma_0$ with $i \in \{1,2,3,4\}.$ That is, the trajectory with exit angle is $T_i(\theta_0)$ is tangent to the circle $\gamma_0.$  Given $\epsilon > 0$ and a point $P$ in the circular ring $\mathcal{A}$, we will show that $\nu$-almost every random billiard trajectory intersects the interior of the ball $B_\epsilon(P)$.


Consider $T_i = T_j^{-1}$ with $j \in \{1,2,3,4\}.$ Since  $p_j(T_i(\theta_0))p_i(\theta_0)>0,$  similarly to the Proposition \ref{densa}, the sequence $\underline{x}= (i,j,i,j,\dots)$ generates a dense orbit at the boundary of the circular table.

Let $r_1, r_2$ be tangent lines to $\gamma_0$ passing through $P$, and let the points $Q_1 \in r_1 \cap \partial U$ and $Q_2 \in r_2 \cap \partial U$ as in Figure \ref{densoanel}. By density of the orbit, we can assume that there is a point $R_1 \in \partial U$ such that $d(R_1, Q_1)<\epsilon$. So the trajectory of $\overline{F}_{\underline{x}}(s_0,\theta_0)$ passing through $R_1$ intersects $B_\epsilon(P)$, since the trajectories passing from $R_1$ is tangent to $\gamma_0$. Due to the ergodicity of the shift, this argument can be made for almost all $\underline{x} \in \Sigma_{\theta_0}$ and therefore for almost every $\underline{x} \in \Sigma_{\theta}$.  

The proof of the second part is very similar to the first one.
\end{proof}

\begin{figure}[htb!]
\centering
\includegraphics[scale=0.3]{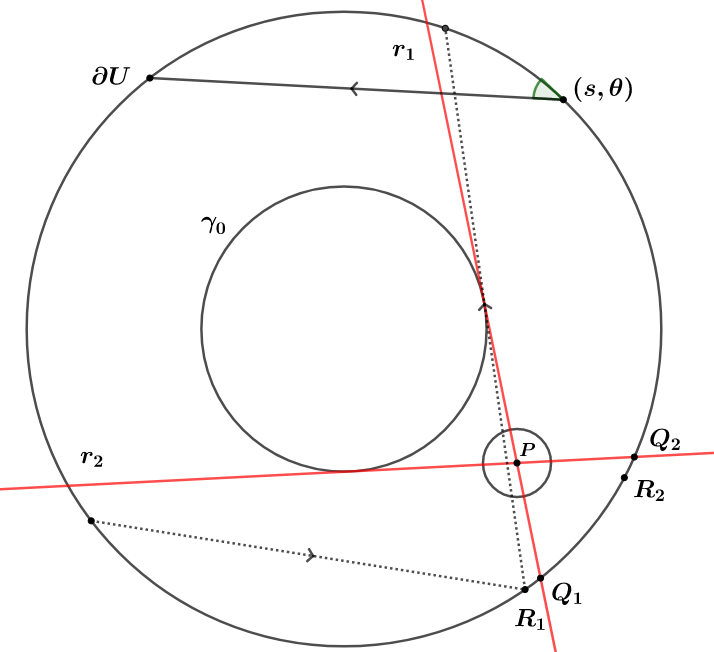}
\caption{Dense trajectory in circular ring $\mathcal{A}.$}\label{densoanel}
\end{figure}

\newpage
\subsection{Lyapunov Exponent}
\begin{definition}
Let $\overline{F}(s,\theta) = F \circ\overline{T}_i(s,\theta)$ be a random billiard map with probability $p_i(\theta)$, and let $\underline{x}\in \Sigma_\theta$. The Lyapunov exponent of $\overline{F}$ with respect to $\underline{x}$ at $(s,\theta)$ in the direction $\vec{v}\in\mathbb{R}^2$ is given by
$$\lambda_{\underline{x}}(\vec{v}, (s,\theta))=\lim_{n \rightarrow \infty} \frac{1}{n}\log\|D_{(s,\theta)}\overline{F}^{(n)}_{\underline{x}}\cdot\vec{v}\|,$$ when the limit exists and when $D_{(s,\theta)}\overline{F}^{(n)}_{\underline{x}}$ is well defined where $\overline{F}^{(n)}_{\underline{x}} = \overline{F}_{x_n} \circ \overline{F}_{x_{n-1}} \circ \dots \circ \overline{F}_{x_2} \circ \overline{F}_{x_1}.$  
\end{definition}

The Lyapunov exponent of the Feres random map $T,$ at point $\theta$ with respect to $\underline{x}\in \Sigma_\theta$ is given by $$\lambda_{\underline{x}}(\theta)= \lim_{n \rightarrow \infty} \frac{1}{n} \log |(T^{(n)}_{\underline{x}})' (\theta)|$$ when the limit exists. Note that the derivative of maps $T_i$ are $\pm1,$ and therefore $$\lambda_{\underline{x}}(\theta)=\lim_{n \rightarrow \infty} \frac{1}{n} \log |(T^{(n)}_{\underline{x}}(\theta))'|=0$$ for all $\underline{x} \in \Sigma_\theta.$

\begin{proposition}
The Lyapunov exponent of the random billiard map in the circle is zero for all $(s,\theta) \in [0,2\pi)\times(0,\pi)$, $\underline{x}\in \Sigma_\theta$ and every direction $\vec{v} \in \mathbb{R}^2$.
\end{proposition}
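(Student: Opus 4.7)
The plan is to reduce everything to an explicit computation of the derivative matrices. Since $\overline{F}_i(s,\theta)=(s+2T_i(\theta)\bmod 2\pi,\,T_i(\theta))$ and each $T_i$ has derivative $\pm 1$, the Jacobian at any point has the upper-triangular form
$$D\overline{F}_i(s,\theta)=\begin{pmatrix} 1 & 2T_i'(\theta) \\ 0 & T_i'(\theta)\end{pmatrix}=\begin{pmatrix} 1 & \pm 2 \\ 0 & \pm 1\end{pmatrix},$$
with determinant $\pm 1$. The first step is to observe that the class of upper-triangular matrices with $(1,\pm 1)$ on the diagonal and an integer multiple of $2$ in the upper-right corner is closed under multiplication. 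In particular, an induction shows that for any $\underline{x}\in\Sigma_\theta$,
$$D_{(s,\theta)}\overline{F}^{(n)}_{\underline{x}}=\begin{pmatrix} 1 & a_n \\ 0 & \varepsilon_n \end{pmatrix},\qquad \varepsilon_n=\pm 1,\;|a_n|\le 2n.$$

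Next I would extract the upper and lower bounds on $\|D_{(s,\theta)}\overline{F}^{(n)}_{\underline{x}}\cdot\vec v\|$ directly from this normal form. Writing $\vec v=(v_1,v_2)$, the image is $(v_1+a_n v_2,\,\varepsilon_n v_2)$, so
$$\|D_{(s,\theta)}\overline{F}^{(n)}_{\underline{x}}\cdot\vec v\|^2=(v_1+a_n v_2)^2+v_2^2.$$
The upper bound $\|D_{(s,\theta)}\overline{F}^{(n)}_{\underline{x}}\cdot\vec v\|\le (1+2n)\|\vec v\|$ follows from $|a_n|\le 2n$, yielding $\frac{1}{n}\log\|D_{(s,\theta)}\overline{F}^{(n)}_{\underline{x}}\cdot\vec v\|\le \frac{\log((1+2n)\|\vec v\|)}{n}\to 0$.

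For the matching lower bound, split into two cases. If $v_2\neq 0$, then the second coordinate alone gives $\|D_{(s,\theta)}\overline{F}^{(n)}_{\underline{x}}\cdot\vec v\|\ge |v_2|>0$, independent of $n$. If $v_2=0$, then $\vec v=(v_1,0)$ with $v_1\neq 0$ and the image is exactly $(v_1,0)$, so $\|D_{(s,\theta)}\overline{F}^{(n)}_{\underline{x}}\cdot\vec v\|=|v_1|$. In either case $\frac{1}{n}\log\|D_{(s,\theta)}\overline{F}^{(n)}_{\underline{x}}\cdot\vec v\|\ge \frac{\log c(\vec v)}{n}\to 0$ for a positive constant $c(\vec v)$ depending only on $\vec v$. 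Combining the two bounds gives $\lambda_{\underline{x}}(\vec v,(s,\theta))=0$.

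There is no real obstacle here: the only mild point is the lower bound, since the matrices $D_{(s,\theta)}\overline{F}^{(n)}_{\underline{x}}$ are unbounded in norm (the entry $a_n$ can drift), so one cannot immediately write $\|D_{(s,\theta)}\overline{F}^{(n)}_{\underline{x}}\cdot\vec v\|\ge c\|\vec v\|$. The case split above handles this cleanly by exploiting that the first column of $D_{(s,\theta)}\overline{F}^{(n)}_{\underline{x}}$ is always $(1,0)^T$, so the unstable growth lives entirely in a single direction and any vector with $v_2\neq 0$ inherits a nonzero second component, while vectors with $v_2=0$ are fixed in length.
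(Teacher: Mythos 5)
Your proof is correct and follows essentially the same route as the paper: compute the Jacobian cocycle as an upper-triangular matrix with $\pm1$ diagonal and an off-diagonal entry bounded by $2n$, then conclude the exponential growth rate is zero. Your explicit case split for the lower bound ($v_2\neq 0$ versus $v_2=0$) is a slightly more careful rendering of the final step, which the paper compresses into the remark that $A_n$ and $B_n$ are bounded.
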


\begin{proof}
First, observe that the derivative $D_{(s,\theta)}\overline{F}^{(n)}_{\underline{x}}$ of the random billiard map in the circle is 
$$D_{(s,\theta)}\overline{F}^{(n)}_{\underline{x}}=\begin{bmatrix}
1 & A_n \\
0 & B_n
\end{bmatrix},$$
where
$$A_n= \sum_{k=1}^n 2T_{x_k}'(T_{x_{k-1}}\circ \dots \circ T_{x_1}(\theta))T'_{x_{k-1}}(T_{x_{k-2}}\circ \dots \circ T_{x_1}(\theta))\dots T'_{x_2}(T_{x_1}(\theta))T'_{x_1}(\theta)$$
and
$$B_n=T_{x_k}'(T_{x_{k-1}}\circ \dots \circ T_{x_1}(\theta))T'_{x_{k-1}}(T_{x_{k-2}}\circ \dots \circ T_{x_1}(\theta))\dots T'_{x_2}(T_{x_1(\theta)})T'_{x_1}(\theta).$$

By linearity of the maps $T_{x_i}$, we conclude that $A_n \in \{-2n,\dots, 0, \dots,2n\}$ and $B_n \in \{-1,1\}$. Hence, taking any direction $\vec{v}=(v_1,v_2)\in \mathbb{R}^2$, we obtain that 
$$\frac{1}{n}\log \|D_{(s,\theta)} \overline{F}_{\underline{x}}^{(n)} \cdot \vec{v} \| =\frac{1}{n} \log \| (v_1+v_2A_n, v_2B_n) \|.$$
Since $A_n$ and $B_n$ are bounded, it follows that
$$\lambda_{\underline{x}}(\vec{v},(s,\theta))=\lim_{n\to \infty} \frac{1}{n} \log \|D_{(s,\theta)}\overline{F}_{\underline{x}}^{(n)} \cdot \vec{v}\| = 0 $$
for all $(s,\theta)\in [0,2\pi)\times(0,\pi)$ and every direction $\vec{v}\in \mathbb{R}^2$. 
\end{proof}




\section*{Acknowledgement(s)}
The authors thank Pablo D. Carrasco and Sylvie O. Kamphorst for their suggestions. Túlio Vales was supported by Coordena\c c\~ao de Aperfei\c coamento de Pessoal de N\'ivel Superior (CAPES-Brasil) and Funda\c c\~ao de Amparo \`a Pesquisa do Estado de Minas Gerais (FAPEMIG-Brasil).

Finally, we thank for the reviewer for the suggestions that improved our manuscript.

\bibliography{bibliografia}
\bibliographystyle{tfnlm}

\appendix
\section{}


The deterministic billiard in the infinite horizon pipeline is defined on a strip bounded by two parallel lines. The deterministic billiard map in the infinite pipeline is then $F(s,\theta) = (s_1(s,\theta), \theta)$, and the random billiard map in the infinite pipeline is $\overline{F}= F\circ \overline{T}$. Its derivative at $(s_0, \theta_0)$ is 
$$D\overline{F}^{(1)}_{\underline{x}}(s_0,\theta_0) = \begin{bmatrix}
-1 & \pm \frac{l_{01}}{\sin T_{x_1}(\theta_0)} \\
0 & -1 
\end{bmatrix}.$$

\begin{proposition}\label{6.2}
If $\frac{\alpha}{\pi}$ is a rational number, then the Lyapunov exponent of the random billiard map in the infinite pipeline is zero for all $(s,\theta) \in [0,2\pi)\times(0,\pi)$, $\underline{x}\in \Sigma_\theta$ and every direction $\vec{v} \in \mathbb{R}^2$.
\end{proposition}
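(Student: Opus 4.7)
The plan is to compute the product $D_{(s,\theta)}\overline{F}^{(n)}_{\underline{x}}$ in closed form, then show its operator norm grows at most linearly in $n$ while its image on any nonzero vector stays bounded below by a positive constant, squeezing the Lyapunov exponent to zero.

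First, setting $\theta_0:=\theta$ and $\tau_k:=T_{x_k}(\theta_{k-1})=\theta_k$ for the outgoing angle after the $k$-th random reflection, the single-step derivative has the shape
$$D\overline{F}_{x_k} = \begin{bmatrix} -1 & d_k \\ 0 & -1 \end{bmatrix}, \qquad d_k = \pm\,\frac{l_{k-1,k}}{\sin \tau_k}.$$
A short induction on $n$ then yields
$$D_{(s,\theta)}\overline{F}^{(n)}_{\underline{x}} = (-1)^n \begin{bmatrix} 1 & \sigma_n \\ 0 & 1 \end{bmatrix},$$
where $\sigma_n$ is a signed sum of $d_1,\dots,d_n$; in particular $|\sigma_n|\leq \sum_{k=1}^n |d_k|$.

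Second, I would show $|d_k|$ is uniformly bounded in $k$. In the pipeline of width $w$, the chord between two consecutive opposite-wall collisions satisfies $l_{k-1,k}=w/\sin\tau_k$, so $|d_k|=w/\sin^2\tau_k$. By the rationality hypothesis together with Proposition \ref{inv}, the set $\mathcal{C}(\theta)$ is finite and contained in $(0,\pi)$, and each $\tau_k$ lies in $\{\theta\}\cup\mathcal{C}(\theta)$; hence $c:=\min_k \sin\tau_k>0$ depends only on $\theta$ and $\alpha$, giving $|d_k|\leq w/c^2$ and consequently $|\sigma_n|\leq n\,w/c^2$.

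Third, I would extract two-sided bounds on $\|D_{(s,\theta)}\overline{F}^{(n)}_{\underline{x}}\cdot\vec v\|$. The upper bound
$$\bigl\|D_{(s,\theta)}\overline{F}^{(n)}_{\underline{x}}\cdot\vec v\bigr\|\leq \bigl(1+|\sigma_n|\bigr)\|\vec v\|\leq C(1+n)\|\vec v\|$$
is immediate. For the lower bound, writing $\vec v=(v_1,v_2)\neq 0$, the image equals $(-1)^n(v_1+\sigma_n v_2,\, v_2)$: its norm is at least $|v_2|>0$ when $v_2\neq 0$ and equals $|v_1|>0$ when $v_2=0$, hence bounded below uniformly in $n$ by some $c_0(\vec v)>0$. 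Sandwiching
$$\frac{\log c_0(\vec v)}{n}\;\leq\; \frac{1}{n}\log\bigl\|D_{(s,\theta)}\overline{F}^{(n)}_{\underline{x}}\cdot\vec v\bigr\|\;\leq\; \frac{\log\bigl(C(1+n)\|\vec v\|\bigr)}{n},$$
and observing that both outer expressions tend to $0$ as $n\to\infty$, one concludes $\lambda_{\underline{x}}(\vec v,(s,\theta))=0$.

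The main obstacle is the uniform bound on $|d_k|$, which reduces to showing $\sin\tau_k$ stays bounded away from zero along the random trajectory; this is exactly where the hypothesis $\alpha/\pi\in\mathbb{Q}$ enters, via the finiteness of $\mathcal{C}(\theta)$ guaranteed by Proposition \ref{inv}. For irrational $\alpha/\pi$ the orbit can in principle visit angles whose sines tend to zero, which is why the present argument does not immediately extend.
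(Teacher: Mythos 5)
Your argument is correct and follows essentially the same route as the paper: compute the product of the triangular one-step derivatives, use the rationality of $\alpha/\pi$ (via the finiteness of $\mathcal{C}(\theta)$ from Proposition \ref{inv}) to bound the off-diagonal entry linearly in $n$, and conclude the exponent vanishes. Your explicit lower bound $\|D\overline{F}^{(n)}_{\underline{x}}\vec v\|\geq c_0(\vec v)>0$ and the expression $l_{k-1,k}=w/\sin\tau_k$ are minor refinements that make precise two points the paper leaves implicit (the $\liminf\geq 0$ direction and the uniform boundedness of the chord lengths), but they do not change the substance of the proof.
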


\begin{proof}
Inductively, we obtain that
$$D\overline{F}^{(n)}_{\underline{x}}(s_0,\theta_0) = (-1)^n\begin{bmatrix}
-1 & \pm \frac{l_{01}}{\sin T_{x_1}(\theta_0)} \pm \frac{l_{12}}{\sin T_{x_2} \circ T_{x_1}(\theta_0)} \pm \dots \pm \frac{l_{(n-1) n}}{\sin T_{x_n}\circ \dots \circ T_{x_1}(\theta_0)} \\
0 & -1 
\end{bmatrix}.$$

Hence, taking any direction $\vec{v}=(v_1,v_2) \in \mathbb{R}^2$, it follows that
\begin{multline*}
\Big\| D_{(s_0,\theta_0)}\overline{F}_{\underline{x}}^{(n)}\cdot v\Big\| =\\
\shoveleft{\qquad = \Big\| \Big( -v_1 + v_2 \Big(\pm \frac{l_{01}}{sin T_{i_1}(\theta_0)} \pm \frac{l_{12}}{sin T_{i_2} \circ T_{i_1}(\theta_0)} \pm \dots}\\
\shoveright{\dots \pm \frac{l_{(n-1) n}}{sin T_{i_n}\circ \dots \circ T_{i_1}(\theta_0)} \Big), -v_2 \Big)\Big\|}\\
\shoveleft{\qquad \leq \Big\| (v_1,v_2) \Big\| + \Bigg\| \Bigg(v_2 \Big(\pm \frac{l_{01}}{sin T_{i_1}(\theta_0)} \pm \frac{l_{12}}{sin T_{i_2} \circ T_{i_1}(\theta_0)} \pm \dots}\\
\dots \pm \frac{l_{(n-1) n}}{sin T_{i_n}\circ \dots \circ T_{i_1}(\theta_0)} \Big), 0\Bigg) \Bigg\|.
\end{multline*}

Since $\frac{\alpha}{\pi}$ is a rational number, we have that $\mathcal{C}(\theta_0) =\{\theta_1, \theta_2, \dots, \theta_m \}$. Let $L$ be the maximum between the numbers $l_{01}, l_{12}, \dots, l_{(n-1)n}$. Take an angle $\theta \in \mathcal{C}(\theta_0)$ such that $\frac{1}{sin \theta} \geq \frac{1}{sin \theta_i}$ for all $i \in \{1,2,\dots,n\}$. Then,
\begin{align*}
\Big\| D_{(s_0,\theta_0)}\overline{F}_{\underline{x}}^{(n)}\cdot v\Big\| &\leq \|(v_1,v_2)\| +  \sqrt{v_2^2 \Big(\frac{l_{01}}{sin \theta_1} + \frac{l_{12}}{sin \theta_2 } + \dots + \frac{l_{(n-1)n}}{sin \theta_n}\Big)^2}  \displaybreak[0]\\
&= \|(v_1,v_2)\| +  v_2\Big(\frac{l_{01}}{sin \theta_1} + \frac{l_{12}}{sin \theta_2 } + \dots + \frac{l_{(n-1)n}}{sin \theta_n}\Big)
\\
&\leq \|(v_1,v_2)\| + L \Big(\frac{1}{sin \theta_1} + \frac{1}{sin \theta_2 } + \dots + \frac{1}{sin \theta_n}\Big) \\
&\leq \|(v_1,v_2)\| + \frac{Ln}{sin \theta}.
\end{align*}
Therefore,
$$\lambda_{\underline{x}}(\vec{v},(s_0,\theta_0)) = \lim_{n\rightarrow\infty} \frac{1}{n}\log \|D_{(s_0,\theta_0)}\overline{F}^{(n)}_{\underline{x}}\cdot\vec{v}\| = \lim_{n\rightarrow\infty}\frac{1}{n}\log \Bigg(\|(v_1,v_2)\| + \frac{Ln}{sin \theta}\Bigg)=0.$$
\end{proof}



\end{document}